\def\IN{{\mathbb N}}
\def\dom{\mathop{\rm dom}}
\newtheorem{defi}{Definition}
\newtheorem{satz}[defi]{Theorem}
\newtheorem{lem}[defi]{Lemma}
\newtheorem{prop}[defi]{Proposition}
\newtheorem{kor}[defi]{Corollary}
\begin{document}

\title{Reordered Computable Numbers}
\author{Philip Janicki
	 \href{mailto:philip.janicki@unibw.de}{\Letter}
	 \orcidlink{0009-0008-9063-028X}}
\affil{Fakultät für Informatik \\
	 Universität der Bundeswehr München \\
}
\date{\today}
\maketitle

\begin{abstract}
	A real number is called \emph{left-computable} if there exists a computable increasing sequence of rational numbers converging to it. In this article we are investigating a proper subset of the left-computable numbers. We say that a real number $x$ is \emph{reordered computable} if there exist a computable function $f \colon \mathbb{N} \to \mathbb{N}$ with $\sum_{k=0}^{\infty} 2^{-f(k)} = x$ and a bijective function $\sigma \colon \mathbb{N} \to \mathbb{N}$ such that the rearranged series $\sum_{k=0}^{\infty} 2^{-f(\sigma(k))}$ converges computably. In this article we will give some examples and counterexamples for reordered computable numbers and we will show that these numbers are closed under addition, multiplication and the Solovay reduction. 
	Finally, we will also present a density theorem for reordered computable numbers.
\end{abstract}

{\bf Keywords:} left-computable numbers, computable convergence, dyadic series, Solovay reduction, regular numbers

\section{Introduction}\label{sec:introduction}

In contrast to classical computability theory, computable analysis focuses on the computability of real-valued functions and the effectiveness of real numbers. Usually real numbers are not given directly but they are described by sequences of rational numbers converging to it. There are three important subsets of the real numbers, which are also mentioned in this article. 
A real number is called \emph{computably approximable} if there exists a computable sequence of rational numbers converging to it. 
A real number is called \emph{left-computable} if there exists a computable increasing sequence of rational numbers converging to it. 
A real number is called \emph{computable} if there exists a computable sequence of rational numbers converging computably to it. 
Every computable number is left-computable and every left-computable number is computably approximable, but none of these implications can be reversed.

From a computability-theoric point of view, the computable numbers are certainly the most important subset of the real numbers. However, the left-computable numbers play a very important role in algorithmic information theory. In this area, the real numbers are often represented by dyadic series. In this article we are interested in a specific subset of the left-computable numbers, which we will describe and define by computable dyadic series converging even computably after they have been rearranged. Most of the results of this paper have been presented \cite{Jan2023} at the conference CCA 2023.

This article consists of seven sections. After the introduction in Section~\ref{sec:introduction} and some preliminaries in Section \ref{sec:preliminaries}, we will motivate and define the \emph{reordered computable numbers} in Section~\ref{sec:reordered-computable-numbers}. We will also give a characterization and show that every regular number is reordered computable. For the theoretical background on regular numbers we refer to Wu~\cite{Wu05a}. After this, Section~\ref{sec:calculating-with-limits} contains a collection of several technical lemmas which will be used in the following sections. In Section~\ref{sec:closure-properties} we will show that the reordered computable numbers are closed under addition and multiplication and that they are closed downwards under the Solovay reduction. 
In Section~\ref{sec:counterexamples} we will show some counterexamples for reordered computable numbers. We will prove that reordered computable numbers which are nearly computable are even computable. This result also implies that reordered computable numbers are not Martin-Löf random. For the theoretical background on nearly computable numbers we refer to Hertling and Janicki~\cite{HJ2023}.
In Section~\ref{sec:characteristic-dyadic-values} we will define the \emph{characteristic dyadic value} for left-computable numbers, which is a real number in the interval $\left[1, 2\right]$ and describes how efficiently this number can be approximated by some computable dyadic series. We will show that the characteristic dyadic value of left-computable numbers which are not reordered computable can be only $2$. And we show that for every computably approximable number in $\left[1,2\right]$ there exists a reordered computable number which is not regular with this characteristic dyadic value. This last main result is proved by an infinite diagonalization.

\section{Preliminaries}\label{sec:preliminaries}

Let $(x_n)_n$ be a sequence of real numbers. We say that $(x_n)_n$ is \emph{increasing} if we have $x_{n} < x_{n+1}$ for all $n \in \IN$. If we have $x_{n} \leq x_{n+1}$ for all $n \in \IN$, we say that $(x_n)_n$ is \emph{non-decreasing}. Analogously, we define a \emph{decreasing} and a \emph{non-increasing} sequence.

Let $(x_n)_n$ be a convergent sequence of real numbers and $x := \lim_{n\to\infty}  x_n$. A function $f\colon \IN \to \IN$ is called a \emph{modulus of convergence} for $(x_n)_n$ if for all~${n \in \IN}$ and for all $k \geq f(n)$ we have $\left|x - x_k\right| \leq 2^{-n}$. We say that $(x_n)_n$ \emph{converges computably} to $x$ if it has a computable modulus of convergence.
For any sequence of rational numbers $(a_n)_n$, the series $\sum_{k=0}^{\infty} a_k$ is called \emph{computable} if the sequence of the partial sums $\left(\sum_{k=0}^{n-1} a_k\right)_n$ is computable. Analogously, $\sum_{k=0}^{\infty} a_k$ \emph{converges computably} if $\left(\sum_{k=0}^{n-1} a_k\right)_n$ converges computably. A series of the form $\sum_{k=0}^{\infty} 2^{-f(k)}$ for some function $f\colon \IN \to \IN$ is called \emph{dyadic}.

A function $f \colon \IN \to \IN$ \emph{tends to infinity} if for every constant $c \in \IN$ there exists a number $m \in \IN$ such that for all $n \geq m$ we have $f(n) > c$. In particular, a function tending to infinity is not bounded.

For better readability we will write $n/2$ instead of $\lfloor \frac{n}{2} \rfloor$ for any $n \in \IN$.
As usual, we will denote with $\left|S\right|$ the number of elements of any finite set~$S$. For any set $A \subseteq \IN$, we define $x_A := \sum_{n \in A} 2^{-(n+1)}$, which is a real number in the interval $\left[0, 1\right]$. It is well-known that $A$ is computable if and only if $x_A$ is computable. If $A$ is only assumed to be computably enumerable, then $x_A$ is a left-computable number; the converse is not true as pointed out by Jockusch (see Soare~\cite{Soa69a}). We say that a real number $x \in \left[0,1\right]$ is \emph{strongly left-computable} if there exists a computably enumerable set $A \subseteq \IN$ with $x_A = x$.

We will also use the Cantor pairing function $\left\langle \cdot, \cdot \right\rangle \colon \IN^2 \to \IN$, which is defined by $\left\langle i,j\right\rangle := \frac{1}{2} \cdot (i+j) \cdot (i+j+1) + j$ for all $(i,j) \in \IN^2$. It is well-known that this function is computable and bijective.

Finally, we fix some standard enumeration $(\varphi_e)_e$ of all computable partial functions from $\IN$ to $\IN$. As usual, the notation $\varphi_{e}(n)[t]{\downarrow}$ denotes that the $e$-th Turing machine, which computes $\varphi_{e}$, stops after at most $t$ steps on input $n$. Then we can define the sets $W_e := \dom(\varphi_e)$ and $W_e[t] := \{n < t \mid \varphi_{e}(n)[t] \downarrow \}$ for all $e, t \in \IN$. Clearly, $\bigcup_{t \in \IN} W_e[t] = W_e$.

Since we are using dyadic series most of the time, we will only consider positive real numbers for the remainder of this article.

\section{Reordered Computable Numbers}\label{sec:reordered-computable-numbers}

It is easy to see that a function $f \colon \IN\to \IN$ tends to infinity if and only if the set $\{k \in \IN \mid f(k) = n\}$ is finite for every $n \in \IN$. This characterization allows us the following definition which will be used all over this article.

\begin{defi}\label{def:definition-u_f}
	Let $f \colon \IN \to \IN$ be a function which tends to infinity. We define the function $u_f \colon \IN\to\IN$ with
	\begin{equation*}
		u_f(n) := \left|\{k \in \IN \mid f(k) = n\}\right|
	\end{equation*}
	for all $n \in \IN$.
\end{defi}

In other words, $u_f$ returns for some input $n$ how often $n$ is enumerated by $f$. Note that in general $u_f$ is not computable, even if $f$ is computable. One important property of this operator is its invariance under permutations. The following statements are obvious. We omit the proofs.

\begin{prop}\label{prop:eigenschaften-u_f}
	Let $f \colon \IN \to \IN$ be a function which tends to infinity.
	\begin{enumerate}[(a)]
		\item For every bijective function $\sigma \colon \IN \to \IN$, the composition $f \circ \sigma$ also tends to infinity and we have $u_{f \circ \sigma} = u_f$.
		\item There exists a unique non-decreasing and unbounded function $f^\ast \colon \IN \to \IN$ with $u_{f^\ast} = u_f$.
		\item For every function $g \colon \IN \to \IN$ tending to infinity and satisfying $u_g = u_f$, there exists a bijective function $\sigma \colon \IN\to\IN$ with $f \circ \sigma = g$.
	\end{enumerate}
\end{prop}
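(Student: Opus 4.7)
The three parts all reduce to routine bookkeeping about the fiber structure of $f$, so the plan is simply to give a clean combinatorial argument for each item rather than fight any real obstacle.

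For part (a), I would first note the equivalent reformulation mentioned in the preceding paragraph: $f$ tends to infinity iff every preimage $f^{-1}(\{n\})$ is finite (equivalently, every sublevel set $\{k : f(k)\le c\}$ is finite). Given a bijection $\sigma$, the key observation is $(f\circ\sigma)^{-1}(\{n\})=\sigma^{-1}(f^{-1}(\{n\}))$. Since $\sigma$ is a bijection, this preimage has the same (finite) cardinality as $f^{-1}(\{n\})$, which simultaneously proves that $f\circ\sigma$ tends to infinity and that $u_{f\circ\sigma}(n)=u_f(n)$.

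For part (b), I would explicitly construct $f^\ast$ by listing the values of $f$ in non-decreasing order with the correct multiplicities. Set $S_n:=\sum_{i<n} u_f(i)$ (which is finite since each $u_f(i)$ is finite), and define $f^\ast(k):=n$ where $n$ is the unique index with $S_n\le k<S_{n+1}$. This is well-defined on all of $\IN$ because $\sum_{i\in\IN} u_f(i)=|\IN|=\infty$ (the fibers partition $\IN$), and it is unbounded because if $f^\ast$ were bounded by $M$ then $\IN$ would be the disjoint union of the finitely many finite sets $(f^\ast)^{-1}(\{n\})$ for $n\le M$, a contradiction. By construction $u_{f^\ast}=u_f$. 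Uniqueness is clear: any non-decreasing function with the same multiplicities must list the values in the same order and with the same counts, so it must agree with $f^\ast$ pointwise.

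For part (c), the idea is to stitch together bijections on individual fibers. For each $n\in\IN$ with $u_f(n)>0$, the sets $F_n:=f^{-1}(\{n\})$ and $G_n:=g^{-1}(\{n\})$ are both finite and, by hypothesis, have the same cardinality $u_f(n)=u_g(n)$, so I can pick some bijection $\tau_n\colon G_n\to F_n$. Since $g$ tends to infinity, the family $(G_n)_n$ partitions $\IN$; likewise $(F_n)_n$ partitions $\IN$. Define $\sigma\colon\IN\to\IN$ by $\sigma(k):=\tau_{g(k)}(k)$. Then $\sigma$ is a bijection (its restriction to each block $G_n$ is the bijection $\tau_n$ onto $F_n$, and the blocks partition both the domain and the codomain), and $f(\sigma(k))=f(\tau_{g(k)}(k))=g(k)$ since $\tau_{g(k)}(k)\in F_{g(k)}$.

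There is no genuine difficulty here; the only point worth stating carefully is that all the fibers involved are finite, which is exactly the characterization of ``tending to infinity'' recalled at the start of the section. That finiteness is what makes the bijections $\tau_n$ in part (c) exist and makes the enumeration in part (b) well-defined.
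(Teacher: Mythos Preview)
Your proposal is correct. The paper itself omits the proof entirely, stating only that ``the following statements are obvious,'' so there is no approach to compare against; your argument supplies exactly the kind of routine fiber-by-fiber bookkeeping one would expect, and each part is handled cleanly.
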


Likewise $u_f$, the function $f^\ast$ is not computable in general, even if $f$ is computable.
Now suppose that $f \colon \IN \to \IN$ is a function such that the series $\sum_{k=0}^{\infty} 2^{-f(k)}$ converges. Of course, this implies that $f$ tends to infinity. From classical analysis we know that for every bijective function $\sigma \colon \IN \to \IN$ the rearranged series $\sum_{k=0}^{\infty} 2^{-f(\sigma(k))}$ also converges, towards the same limit. However, the speed of convergence of the rearranged series may differ from the speed of convergence of the original series. Clearly, a rearranged series converges fastest if its biggest jumps are made first.

\begin{prop}\label{prop:reordered-series-optimal-convergence}
	Let $f \colon \IN\to \IN$ be a function such that the series $\sum_{k=0}^{\infty} 2^{-f(k)}$ converges. Let $\sigma \colon \IN \to \IN$ be a bijective function. Then we have 
	\begin{equation*}
		\sum_{k=n}^{\infty} 2^{-f^\ast(k)} \leq \sum_{k=n}^{\infty} 2^{-f(\sigma(k))}
	\end{equation*}
	for all $n \in \IN$.
\end{prop}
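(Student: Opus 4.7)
My plan is to reduce the inequality to a statement about \emph{initial} partial sums, and then use a rearrangement argument on multisets. Since both $\sum_{k=0}^\infty 2^{-f(k)}$ and $\sum_{k=0}^\infty 2^{-f(\sigma(k))}$ are series of non-negative terms indexed by a bijection, they have the same total sum $S$. Using Proposition~\ref{prop:eigenschaften-u_f}(b) to obtain $f^\ast$ and then (c) with $g = f^\ast$ to obtain a bijection $\tau$ with $f \circ \tau = f^\ast$, I also get $\sum_{k=0}^\infty 2^{-f^\ast(k)} = S$. Therefore the claimed inequality is equivalent to
\begin{equation*}
	\sum_{k=0}^{n-1} 2^{-f(\sigma(k))} \leq \sum_{k=0}^{n-1} 2^{-f^\ast(k)},
\end{equation*}
and it suffices to establish this finite inequality for every $n \in \IN$.

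The core of the argument is then a rearrangement statement: among all $n$-element subsets $B \In \IN$, the quantity $\sum_{k \in B} 2^{-f(k)}$ is maximized when $B$ collects those indices where $f$ attains its $n$ smallest values (with multiplicity). I plan to make this precise by sorting. Write $B = \{\sigma(0), \ldots, \sigma(n-1)\}$, enumerate the values $f(\sigma(0)), \ldots, f(\sigma(n-1))$ in non-decreasing order as $v_0 \leq v_1 \leq \cdots \leq v_{n-1}$, and compare with $f^\ast(0) \leq f^\ast(1) \leq \cdots \leq f^\ast(n-1)$. I would argue that $v_i \geq f^\ast(i)$ for each $0 \leq i \leq n-1$: otherwise $v_0, \ldots, v_i$ would yield $i+1$ values of $f$ (counted with multiplicity, using that $\sigma$ is injective) that are all strictly less than $f^\ast(i)$, contradicting the fact that $f^\ast(0), \ldots, f^\ast(i-1)$ already exhaust the $i$ smallest values of the multiset $\{f(k) : k \in \IN\}$ and no further value below $f^\ast(i)$ is available.

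From $v_i \geq f^\ast(i)$ I get $2^{-v_i} \leq 2^{-f^\ast(i)}$ for each $i$, and summing over $i$ gives
\begin{equation*}
	\sum_{k=0}^{n-1} 2^{-f(\sigma(k))} = \sum_{i=0}^{n-1} 2^{-v_i} \leq \sum_{i=0}^{n-1} 2^{-f^\ast(i)},
\end{equation*}
which is the required inequality. Subtracting both sides from $S$ then yields the tail estimate claimed in the proposition.

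I expect the only delicate point to be the multiset bookkeeping behind the inequality $v_i \geq f^\ast(i)$, since $f$ need not be injective and $f^\ast$ is only characterized through $u_{f^\ast} = u_f$. The cleanest way to handle this is probably to phrase everything in terms of the counting function $n \mapsto |\{k \in \IN \mid f(k) \leq n\}|$, which equals $\sum_{j=0}^n u_f(j) = \sum_{j=0}^n u_{f^\ast}(j) = |\{k \in \IN \mid f^\ast(k) \leq n\}|$ by Proposition~\ref{prop:eigenschaften-u_f}, so that "the $n$ smallest values" is well-defined and agnostic to the chosen representative of the rearrangement.
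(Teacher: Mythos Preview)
Your argument is correct. The reduction to initial partial sums via the common total $S$ is valid because all three series are rearrangements of a convergent series of non-negative terms; and the key pointwise inequality $v_i \geq f^\ast(i)$ follows exactly as you indicate, most cleanly through the level-set counting identity
\[
	\bigl|\{k \in \IN \mid f(k) \leq m\}\bigr| \;=\; \sum_{j=0}^{m} u_f(j) \;=\; \sum_{j=0}^{m} u_{f^\ast}(j) \;=\; \bigl|\{k \in \IN \mid f^\ast(k) \leq m\}\bigr|,
\]
applied with $m = f^\ast(i)-1$: the right-hand count is at most $i$ since $f^\ast$ is non-decreasing, while $v_i < f^\ast(i)$ would force the left-hand count to be at least $i+1$ via the $i+1$ distinct indices $\sigma(0),\dots,\sigma(n-1)$ realising $v_0,\dots,v_i$.

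As for comparison with the paper: the paper states Proposition~\ref{prop:reordered-series-optimal-convergence} without proof, treating it as a standard rearrangement fact. Your write-up supplies precisely the expected argument, and the final paragraph about phrasing things through the counting function is indeed the right way to make the multiset bookkeeping watertight; you could streamline by invoking that identity directly in place of the informal ``exhaust the $i$ smallest values'' sentence.
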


Proposition \ref{prop:reordered-series-optimal-convergence} shows us that the convergence speed of a series can be accelerated if it is rearranged and that there exists an optimal rearrangement of that series. This has already a significant impact, as the following example shows. Let $x$ be a strongly left-computable number which is not computable, and let $f \colon \IN \to \IN$ be a computable injective function with $\sum_{k=0}^{\infty} 2^{-f(k)} = x$. Surely, this series does not converge computably. However, $f^\ast$ is an increasing function, and the reordered series $\sum_{k=0}^{\infty} 2^{-f^\ast(k)}$ converges computably.

Another attempt to increase the convergence speed of a dyadic series after the reordering is by grouping jumps of the same size. Suppose that $f \colon \IN\to \IN$ is a function such that the series $\sum_{k=0}^{\infty} 2^{-f(k)}$ converges. In most cases, the series $\sum_{k=0}^{\infty} u_f(k) \cdot 2^{-k}$ should converge faster than the series $\sum_{k=0}^{\infty} 2^{-f^\ast(k)}$, especially if $f$ enumerates many numbers several times. However, the gap between a non-computable convergence and a computable convergence of a reordered series cannot be overcome by grouping. To show this, we first prove the following lemma.

\begin{lem}\label{lem:identity-added}
	Let $f, g \colon \IN \to \IN$ be non-decreasing and unbounded functions with $u_g(n) = u_f(n) + 1$ for all $n \in \IN$. The series $\sum_{k=0}^{\infty} 2^{-f(k)}$ converges computably if and only if the series $\sum_{k=0}^{\infty} 2^{-g(k)}$ converges computably.
\end{lem}

\begin{proof}
	We will only show the non-obvious implication. Suppose that the series $\sum_{k=0}^{\infty} 2^{-f(k)}$ converges computably, and fix some computable function $s \colon \IN \to \IN$ which is a modulus of convergence.
	
	We define the function $t \colon \IN\to\IN$ by $t(n):=2 \cdot s(n+1) + n + 3$. It is clear that $t$ is computable. We still have to show that $t$ is a modulus of convergence of the series $\sum_{k=0}^{\infty} 2^{-g(k)}$. Note that every natural number is enumerated by $g$ at least once.
	
	Let $n \in \IN$ be arbitrary. We have to distinguish two different cases:
	\begin{enumerate}[(1)]
		\item Among the first $t(n)$ elements, more than $s(n+1)$ have been enumerated by $g$ more than once. This implies that $f$ enumerates those elements at least once as well. Furthermore, the number $f(s(n+1))$ has been enumerated by $g$.
		Note that, due to $\sum_{k=s(n+1)}^{\infty} 2^{-f(k)} \leq 2^{-(n+1)}$, we have $f(k) > n+1$ for all $k \geq s(n)$ in particular. Since $f$ is non-decreasing, all numbers less than or equal to $n+1$ have also been enumerated by $g$.
		We conclude
		\begin{equation*}
			\sum_{k=t(n)}^\infty 2^{-g(k)}
			\leq \sum_{k=s(n+1)+1}^\infty 2^{-f(k)} + \sum_{k=n+2}^\infty 2^{-k}
			\leq 2^{-(n+1)} + 2^{-(n+1)}
			= 2^{-n} .
		\end{equation*}
		\item Among the first $t(n)$ elements, at most $s(n+1)$ have been enumerated by $g$ more than once. Let $\ell \in \{0, \dots, s(n+1)\}$ be the exact number. In other words, among the first $t(n)$ elements, at least $t(n)-\ell \geq t(n)-s(n+1) = s(n+1) + n + 3$ have been enumerated by $g$ only once. Since $g$ is non-decreasing, at least all natural numbers up to $m := s(n+1) + n + 2$ have been enumerated by $g$. After the first $\ell$ elements being enumerated by $f$, the next $s(n+1) - \ell$ elements have to be greater than $m$. Using $k \cdot 2^{-k} \leq \frac{1}{2}$ for all $k\in\IN$, we obtain
		\begin{align*}
			\sum_{k=t(n)}^\infty 2^{-g(k)}
			&\leq \sum_{k=s(n+1)}^\infty 2^{-f(k)} 
			+ (s(n+1)-\ell) \cdot 2^{-(m+1)}
			+ \sum_{k=m+1}^\infty 2^{-k} \\
			&\leq  2^{-(n+1)} + s(n+1) \cdot 2^{-(m+1)} + 2^{-m} \\
			&\leq 2^{-(n+1)} + 2^{-(n+4)} + 2^{-(n+2)} \\
			&< 2^{-n} . 
		\end{align*}	
	\end{enumerate}
	Therefore, the series $\sum_{k=0}^{\infty} 2^{-g(k)}$ converges computably.
\end{proof}

\begin{prop}\label{prop:characterisierung-roc-number}
	Let $f \colon \IN \to \IN$ be a function which tends to infinity. Then the following statements are equivalent:
	\begin{enumerate}[(1)]
		\item There is a bijective function $\sigma \colon \IN \to \IN$ such that the series $\sum_{k=0}^{\infty} 2^{-f(\sigma(k))}$ converges computably.
		\item The series $\sum_{k=0}^{\infty} 2^{-f^\ast(k)}$ converges computably.
		\item The series $\sum_{k=0}^{\infty} u_f(k) \cdot 2^{-k}$ converges computably.
	\end{enumerate}
\end{prop}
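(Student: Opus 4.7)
The plan is to prove $(1) \Leftrightarrow (2)$ and $(2) \Leftrightarrow (3)$ separately. The first equivalence is essentially immediate from the preceding propositions: Proposition~\ref{prop:reordered-series-optimal-convergence} gives $\sum_{k\geq n} 2^{-f^\ast(k)} \leq \sum_{k\geq n} 2^{-f(\sigma(k))}$ for every bijection $\sigma$, so any computable modulus of convergence for the $\sigma$-rearrangement is also one for the $f^\ast$-series, giving (1) $\Rightarrow$ (2). Conversely, Proposition~\ref{prop:eigenschaften-u_f}(c) produces a bijection $\sigma_0$ with $f \circ \sigma_0 = f^\ast$, so the $\sigma_0$-rearrangement coincides with the $f^\ast$-series and (2) $\Rightarrow$ (1) is trivial.

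For the equivalence of (2) and (3), the central observation is that $f^\ast$ visits each value $k$ exactly $u_f(k)$ times in non-decreasing order. Setting $N(n) := \sum_{k<n} u_f(k)$, this yields $\sum_{j=0}^{N(n)-1} 2^{-f^\ast(j)} = \sum_{k=0}^{n-1} u_f(k)\, 2^{-k}$, and hence
\[
T_n^u = T_{N(n)}^\ast,
\]
writing $T_n^u$ and $T_k^\ast$ for the tails $\sum_{k \geq n} u_f(k)\, 2^{-k}$ and $\sum_{j \geq k} 2^{-f^\ast(j)}$, respectively. For (3) $\Rightarrow$ (2), given a computable strictly increasing modulus $m_u$ for the $u$-series, I bound $N(m_u(i))$ computably from above. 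The inequality $u_f(l) \cdot 2^{-l} \leq T_l^u$ combined with the modulus yields $u_f(l) \leq 2^{l - j(l)}$ for $l \geq m_u(0)$, where $j(l) := \max\{j : m_u(j) \leq l\}$. The initial contribution $U := \sum_{l < m_u(0)} u_f(l)$ is a fixed natural number determined by $f$ alone, and so is available as a computable constant; then
\[
m'(i) := U + \sum_{l=m_u(0)}^{m_u(i)-1} 2^{l - j(l)}
\]
is computable, satisfies $m'(i) \geq N(m_u(i))$, and therefore $T_{m'(i)}^\ast \leq T_{N(m_u(i))}^\ast = T_{m_u(i)}^u \leq 2^{-i}$.

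For (2) $\Rightarrow$ (3), the strategy is dual: given computable $m'$, find computable $m_u$ with $N(m_u(i)) \geq m'(i)$, which by the same identity gives $T_{m_u(i)}^u = T_{N(m_u(i))}^\ast \leq T_{m'(i)}^\ast \leq 2^{-i}$. I expect this to be the main obstacle, because the $\ast$-modulus only provides the one-sided bound $f^\ast(k) \geq i$ for $k \geq m'(i)$, so unlike the previous direction there is no direct analogue of the upper bound on $u_f$. I would attack this by exploiting the jumps $T_{m'(i)-1}^\ast - T_{m'(i)}^\ast = 2^{-f^\ast(m'(i)-1)}$ of the $\ast$-series at indices produced by the modulus, together with a hard-coded finite amount of initial data about $f^\ast$ (analogous to the constant $U$ above), to produce a computable upper bound on the smallest $n$ with $N(n) \geq m'(i)$.
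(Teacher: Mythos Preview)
Your treatment of $(1)\Leftrightarrow(2)$ and of $(3)\Rightarrow(2)$ is correct and close in spirit to the paper's. For $(3)\Rightarrow(2)$ the paper uses the cruder pointwise bound $u_f(l)\le 2^{c+l}$ (coming from a single constant $c$ with $\sum_k u_f(k)\,2^{-k}\le 2^c$) in place of your sharper $u_f(l)\le 2^{l-j(l)}$, but the underlying idea---bound $N(\cdot)$ from above via pointwise bounds on $u_f$, then use your tail identity $T_n^u=T_{N(n)}^\ast$---is the same.

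The direction $(2)\Rightarrow(3)$, however, is a genuine gap: the strategy you sketch cannot be completed. You aim for a computable $m_u$ with $N(m_u(i))\ge m'(i)$, which forces $m_u(i)>f^\ast(m'(i)-1)$. But the $\ast$-modulus supplies only \emph{lower} bounds on $f^\ast$ (namely $f^\ast(k)\ge j$ whenever $k\ge m'(j)$), and the ``jumps'' $2^{-f^\ast(m'(i)-1)}$ cannot be read off without already computing $f^\ast$. No finite hard-coding helps: take any strictly increasing $a\colon\IN\to\IN$ dominating every computable function and let $f:=a$ (so $f^\ast=a$). Then $m'(i):=i+1$ is a valid $\ast$-modulus since $a(k)\ge k$, yet $N(n)\ge i+1$ requires $n>a(i)$, which has no computable bound. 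Condition~(3) still holds here---$m_u(i):=i+1$ works because the values $a(k)\ge i+1$ are distinct integers $\ge i+1$---but not via your inequality $N(m_u(i))\ge m'(i)$.

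The paper sidesteps this by first reducing to the case $u_f(n)\ge 1$ for all $n$ (pad $f$ with the identity: $g(2k):=f(k)$, $g(2k{+}1):=k$; then $u_g=u_f+1$, and each of (1),(2),(3) transfers between $f$ and $g$---for (2) most cleanly via the already-proved equivalence with (1)). Under this assumption your own identity finishes the job in one line: $u_f\ge 1$ gives $N(n)\ge n$, hence $T_n^u=T_{N(n)}^\ast\le T_n^\ast$, so any $\ast$-modulus is already a $u$-modulus. In other words, $(2)\Rightarrow(3)$ is the \emph{easier} direction, not the harder one.
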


\begin{proof} \;
	\begin{itemize}
		\item[$(1) \implies (2)$:] This is clear due to Proposition \ref{prop:reordered-series-optimal-convergence}.
		\item[$(2) \implies (1)$:] This is clear due to Proposition \ref{prop:eigenschaften-u_f}.
		\item[$(2) \implies (3)$:] Suppose that the series $\sum_{k=0}^{\infty} 2^{-f^\ast(k)}$ converges computably, and let $s \colon \IN\to \IN$ be a computable modulus of convergence. Due to Lemma \ref{lem:identity-added}, we can assume $u_f(n) \geq 1$ for all $n \in \IN$. Since $f^\ast$ is non-decreasing, we have $f^\ast(n) \leq n$ for all $n \in \IN$ in particular. Considering an arbitrary $n \in \IN$, we finally obtain
		\begin{equation*}
			\sum_{k=s(n)}^{\infty} u_f(k) \cdot 2^{-k} \leq \sum_{k=s(n)}^{\infty} 2^{-f^\ast(k)} \leq 2^{-n}.
		\end{equation*}
		Therefore, the series $\sum_{k=0}^{\infty} u_f(k) \cdot 2^{-k}$ converges computably.
		\item[$(3) \implies (2)$:] Suppose that the series $\sum_{k=0}^{\infty} u_f(k) \cdot 2^{-k}$ converges computably, and let $s \colon \IN \to \IN$ be a computable modulus of convergence. Fix some $c \in \IN$ with $\sum_{k=0}^{\infty} u_f(k) \cdot 2^{-k} \leq 2^c$. Then we must have $u_f(n) \leq 2^{c+n}$ for all $n \in \IN$. Define the function $t \colon \IN\to \IN$ by $t(n) := \sum_{k=0}^{s(n)-1} 2^{c+k}$. Obviously, $t$ is computable. Considering an arbitrary $n \in \IN$, we finally obtain
		\begin{equation*}
			\sum_{k=t(n)}^{\infty} 2^{-f^\ast(k)} \leq \sum_{k=s(n)}^{\infty} u_f(k) \cdot 2^{-k} \leq 2^{-n}.
		\end{equation*}
	Therefore, the series $\sum_{k=0}^{\infty} 2^{-f^\ast(k)}$ converges computably.
	\end{itemize}
\end{proof}

For the remainder of this article we are only interested in functions $f \colon \IN \to \IN$ such that the series $\sum_{k=0}^{\infty} 2^{-f(k)}$ converges.

\begin{defi}\label{def:name-of-a-real-number}
	Let $f \colon \IN \to \IN$ be a function and $x$ be a real number. We say that $f$ is a \emph{name} for $x$ if we have $\sum_{k=0}^{\infty} 2^{-f(k)} = x$.
\end{defi}

Obviously, a name uniquely describes a real number. It is also clear that every real number has infinitely many names. Furthermore, a real number is left-computable if and only if it has a computable name. Now we are ready for the central definition of this article.

\begin{defi}\label{def:reordered-computable-number}
	We say that a real number is \emph{reordered computable} if it has a computable name $f \colon \IN \to \IN$ such that the series $\sum_{k=0}^{\infty} u_f(k) \cdot 2^{-k}$ converges computably.
\end{defi}

By definition, every reordered computable number is left-computable. In Section~\ref{sec:counterexamples} we will see that the converse is not true. At the end of this section, we will give a simple example for a reordered computable number.

\begin{defi}[Wu \cite{Wu05a}]
	A real number is called \emph{regular} if it can be written as the sum of strongly left-computable numbers.
\end{defi}

Every regular number is left-computable, but the converse is not true. For example, every left-computable number which is Martin-Löf random is not regular. It is well-known that strongly left-computable numbers are not Martin-Löf random, and Downey, Hirschfeldt and Nies showed \cite{DHN2002} that the sum of left-computable numbers which are not Martin-Löf random is not Martin-Löf random as well. Therefore, regular numbers are not Martin-Löf random.
The following characterization for regular numbers is easy to verify. We omit the proof.

\begin{prop}\label{prop:charakterisierung-regular}
	For a real number $x$ the following are equivalent:
	\begin{enumerate}[(1)]
		\item $x$ is regular.\label{prop:charakterisierung-regular-property-1}
		\item There exists a computable name $f \colon \IN \to \IN$ for $x$ such that $u_f$ is bounded. \label{prop:charakterisierung-regular-property-2}
	\end{enumerate}
\end{prop}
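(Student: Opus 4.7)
The plan is to prove both directions by direct construction, using the identity $\sum_{k=0}^{\infty} 2^{-f(k)} = \sum_{n=0}^{\infty} u_f(n) \cdot 2^{-n}$ to pass between a name $f$ and its associated \emph{level decomposition}.

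For the implication $(2) \Rightarrow (1)$, I would start with a computable name $f$ for $x$ satisfying $u_f(n) \leq c$ for all $n \in \IN$, and define the level sets $B_i := \{ n \in \IN : u_f(n) \geq i \}$ for $i = 1, \ldots, c$. Each $B_i$ is c.e., since $n$ can be enumerated into $B_i$ as soon as $i$ distinct indices $k$ with $f(k) = n$ have been observed during a simulation of $f$. Using $u_f(n) = \sum_{i=1}^{c} \mathbf{1}[n \in B_i]$, rearrangement gives $x = \sum_{i=1}^{c} \sum_{n \in B_i} 2^{-n}$. Setting $A_i := \{ n-1 : n \in B_i, \, n \geq 1 \}$, which is c.e., the inner sum becomes $\mathbf{1}[0 \in B_i] + x_{A_i}$, and the residual integer $\sum_{i} \mathbf{1}[0 \in B_i] = u_f(0) \leq c$ can be written as that many copies of $1 = x_{\IN}$. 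Hence $x$ is exhibited as a finite sum of strongly left-computable numbers.

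For the implication $(1) \Rightarrow (2)$, assume $x = y_1 + \ldots + y_c$ with each $y_i = x_{A_i}$ for a c.e.\ set $A_i \In \IN$. After dropping any $y_i = 0$, I would replace each finite $A_i$ by an infinite c.e.\ set with the same value $y_i$, using the identity $2^{-(m+1)} = \sum_{n > m} 2^{-(n+1)}$ to substitute the largest element of $A_i$ by a cofinite tail. Fix a one-to-one computable enumeration $g_i$ of each (now infinite) $A_i$, define $f_i(k) := g_i(k) + 1$, and interleave via $f(ck + j) := f_{j+1}(k)$ for $j \in \{0, \ldots, c-1\}$. Then $f$ is a total computable name for $\sum_i y_i = x$ with $u_f(n) = \sum_{i=1}^{c} u_{f_i}(n) \leq c$, since each $f_i$ is injective.

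The only mild technical point is the padding step that converts finite c.e.\ sets into infinite ones without changing their $x_A$-value, which is required to guarantee that the injective enumerations $g_i$ are total; aside from this, everything is bookkeeping around the identity $\sum_k 2^{-f(k)} = \sum_n u_f(n) \cdot 2^{-n}$, which is presumably why the authors chose to omit the proof.
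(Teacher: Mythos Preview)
Your argument is correct. The paper explicitly omits the proof of this proposition, stating only that the characterization ``is easy to verify'', so there is no proof in the paper to compare against; your construction---decomposing $u_f$ into its level sets for $(2)\Rightarrow(1)$, and interleaving injective enumerations of the c.e.\ sets for $(1)\Rightarrow(2)$---is the standard route and is presumably what the author had in mind.

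One minor remark on presentation: in the direction $(1)\Rightarrow(2)$, the case distinction ``finite vs.\ infinite $A_i$'' is not effective, but since there are only finitely many summands this non-uniformity is harmless for the mere existence of a computable name; you already note this implicitly, but it may be worth saying explicitly.
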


\begin{prop}
	Every regular number is reordered computable.
\end{prop}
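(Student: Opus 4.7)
The plan is to invoke the characterization of regular numbers from Proposition \ref{prop:charakterisierung-regular}: if $x$ is regular, it has a computable name $f \colon \IN \to \IN$ for which $u_f$ is bounded, say $u_f(n) \leq M$ for all $n \in \IN$. I would then show that this very same $f$ already witnesses that $x$ is reordered computable, so no new name has to be constructed.

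The key step is to exhibit a computable modulus of convergence for the series $\sum_{k=0}^{\infty} u_f(k) \cdot 2^{-k}$. Boundedness of $u_f$ immediately yields the geometric tail bound
\begin{equation*}
	\sum_{k=N}^{\infty} u_f(k) \cdot 2^{-k} \;\leq\; M \sum_{k=N}^{\infty} 2^{-k} \;=\; M \cdot 2^{-N+1}
\end{equation*}
for every $N \in \IN$. Fixing $c \in \IN$ with $M \leq 2^c$ and setting $s(n) := n + c + 1$, one immediately checks that $\sum_{k=s(n)}^{\infty} u_f(k) \cdot 2^{-k} \leq 2^{-n}$, so $s$ is a computable modulus of convergence for the sequence of partial sums. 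By Definition \ref{def:reordered-computable-number}, $x$ is reordered computable.

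There is really no obstacle here; the only point that might deserve a remark is that the definition of reordered computability requires the series $\sum_{k=0}^{\infty} u_f(k) \cdot 2^{-k}$ to converge computably, which by the paper's conventions means only the existence of a computable modulus of convergence, not the computability of the (generally non-computable) partial sums themselves. The tail estimate above provides exactly such a modulus, so the argument is complete.
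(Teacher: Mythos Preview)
Your proof is correct and essentially identical to the paper's own argument: both invoke Proposition~\ref{prop:charakterisierung-regular} to obtain a computable name $f$ with $u_f$ bounded by some $2^c$, then use the geometric tail estimate to verify that $s(n) := n + c + 1$ is a computable modulus of convergence for $\sum_{k=0}^{\infty} u_f(k)\cdot 2^{-k}$. Your closing remark about the partial sums not needing to be computable is a welcome clarification but not required for the argument.
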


\begin{proof}
	Let $x$ be a regular number. According to Proposition \ref{prop:charakterisierung-regular},
	there exist a computable name $f \colon \IN \to\IN$ for $x$ and a constant $c \in \IN$ with $u_f(n) \leq 2^c$ for all $n \in \IN$. Define the function $s \colon \IN \to \IN$ with $s(n) := n + c + 1$ for all $n \in \IN$. Surely, $s$ is computable, and we claim that $s$ is a modulus of convergence for the series $\sum_{k=0}^{\infty} u_f(k) \cdot 2^{-k}$. Considering an arbitrary $n \in \IN$, we obtain
	\begin{equation*}
		\sum_{k=s(n)}^{\infty} u_f(k) \cdot 2^{-k} \leq \sum_{k=s(n)}^{\infty} 2^c \cdot 2^{-k} = 2^c \cdot \sum_{k=n+c+1}^{\infty} 2^{-k} = 2^c \cdot 2^{-(n+c)} = 2^{-n}.
	\end{equation*}
\end{proof}

Later we will see that not every reordered computable number is regular. Further, more abstract examples will also occur in the next sections.

\section{Calculating with Limits}\label{sec:calculating-with-limits}

This section contains the proofs of several technical lemmas. They have been outsourced in order to improve the readability of the next sections where these results are used. The first lemma in this section is the well-known root test from classical calculus, which also gives us an effective estimate for the remainder.

\begin{lem}\label{lem:wurzelkriterium-abschaetzung-restglied}
	Let $(a_n)_n$ be a sequence with $\limsup_{n\to\infty} \sqrt[n]{\left| a_n \right|} < 1$. Then the series $\sum_{k=0}^{\infty} a_k$ converges absolutely, and there is a rational number $q \in \left[0, 1\right[ $ with $\sum_{k=n}^{\infty} \left|a_k\right| \leq \frac{q^{n}}{1-q}$ for all $n \in \IN$.
\end{lem}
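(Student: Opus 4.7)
The plan is to reduce everything to a geometric series estimate and then fine-tune the base so that the bound holds uniformly from $n = 0$ onwards, not just for sufficiently large $n$. First I would fix a rational number $r$ with $\limsup_{n\to\infty} \sqrt[n]{|a_n|} < r < 1$; by the definition of $\limsup$, there then exists some $N \in \IN$ with $|a_n| \leq r^n$ for all $n \geq N$. Comparison with the geometric series $\sum_{k} r^k$ yields absolute convergence of $\sum_{k=0}^{\infty} a_k$ and, in particular, $S := \sum_{k=0}^{\infty} |a_k| < \infty$.

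Next, for every rational $q \in [r, 1[$ and every $n \geq N$ one obtains
\begin{equation*}
    \sum_{k=n}^{\infty} |a_k| \;\leq\; \sum_{k=n}^{\infty} r^k \;=\; \frac{r^n}{1-r} \;\leq\; \frac{q^n}{1-q},
\end{equation*}
using $q^n \geq r^n$ and $1-q \leq 1-r$. So the desired estimate is automatic for the tail $n \geq N$ as soon as one picks $q \geq r$.

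For the finitely many indices $n < N$ I would exploit that $\frac{q^{N-1}}{1-q} \to \infty$ as $q \to 1^-$. Hence I can additionally choose $q$ rational in $[r, 1[$ so close to $1$ that $\frac{q^{N-1}}{1-q} \geq S$. Since $q^n \geq q^{N-1}$ holds for all $n \in \{0, 1, \dots, N-1\}$, one then gets
\begin{equation*}
    \sum_{k=n}^{\infty} |a_k| \;\leq\; S \;\leq\; \frac{q^{N-1}}{1-q} \;\leq\; \frac{q^n}{1-q}
\end{equation*}
for all such $n$, which together with the previous estimate covers every $n \in \IN$.

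The only subtle point is precisely that the estimate is required \emph{uniformly} for all $n \in \IN$ and not merely asymptotically; this is resolved by choosing $q$ closer to $1$ than the a priori threshold $r$ would dictate. Beyond this bit of bookkeeping I do not anticipate any real obstacle, since everything else reduces to the classical geometric series bound.
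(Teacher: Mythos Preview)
Your proof is correct and follows essentially the same approach as the paper: obtain the geometric tail bound $\sum_{k=n}^{\infty}|a_k|\leq \frac{p^n}{1-p}$ for all sufficiently large $n$ from the root test, and then push the base up to some rational $q\in[p,1[$ close enough to $1$ that the finitely many remaining indices are absorbed by $\frac{q^n}{1-q}\to\infty$. The only differences are cosmetic (you fix a rational threshold $r$ from the outset and make the monotonicity of $q\mapsto \frac{q^n}{1-q}$ explicit), so there is nothing to add.
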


\begin{proof}
	Since we have $\limsup_{n\to\infty} \sqrt[n]{\left| a_n \right|} < 1$, there exist a real number ${p \in \left[0, 1\right[}$ and a number $m \in \IN$ with $\left|a_n\right| \leq p^n$ for all $n \geq m$. So we have $\sum_{k=n}^{\infty} \left|a_k\right| \leq \sum_{k=n}^{\infty} p^k = \frac{p^n}{1-p}$
	for all $n \geq m$. This already implies that $\sum_{k=0}^{\infty} a_k$ converges absolutely. Furthermore, fix some rational number $q \in \left[p, 1\right[$ with $\sum_{k=n}^{\infty} \left|a_k\right| \leq \frac{q^n}{1-q}$ for all $n < m$. Note that such a number exists, since we have $\lim_{q \to 1, q < 1} \frac{q^n}{1-q} = \infty$ for any fixed $n \in \IN$, and there are only finitely many $n < m$. So we can finally conclude $\sum_{k=n}^{\infty} \left|a_k\right| \leq \frac{q^n}{1-q}$ for all $n \in \IN$.
\end{proof}

One application of Lemma \ref{lem:wurzelkriterium-abschaetzung-restglied} is the proof of the next lemma, which gives us a sufficient criterion for computable convergence of a dyadic series. Note that there are no assumptions on the computability of the function.

\begin{lem}\label{lem:berechenbare-konvergenz-dyadische-reihe}
	Let $f \colon \IN \to \IN$ be a function with $\limsup_{n\to\infty} \sqrt[n]{f(n)} < 2$. Then the series $\sum_{k=0}^{\infty} f(k) \cdot 2^{-k}$ converges computably.
\end{lem}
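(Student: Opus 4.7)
The plan is to apply Lemma~\ref{lem:wurzelkriterium-abschaetzung-restglied} to the sequence $a_k := f(k) \cdot 2^{-k}$. Since $\sqrt[k]{|a_k|} = \sqrt[k]{f(k)} / 2$, the hypothesis $\limsup_{k\to\infty} \sqrt[k]{f(k)} < 2$ immediately gives $\limsup_{k\to\infty} \sqrt[k]{|a_k|} < 1$. The lemma therefore produces a rational number $q \in [0, 1)$ such that
\begin{equation*}
	\sum_{k=n}^{\infty} f(k) \cdot 2^{-k} \leq \frac{q^n}{1-q}
\end{equation*}
holds for every $n \in \IN$.

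The remaining task is to extract from this inequality a \emph{computable} modulus of convergence for the partial sums. Since $q$ is a fixed rational and, for every input $n$, the quantity $(1-q) \cdot 2^{-n}$ can be computed exactly as a rational, I would define $s \colon \IN \to \IN$ by letting $s(n)$ be the least $k \in \IN$ with $q^k \leq (1-q) \cdot 2^{-n}$. This function is computable: it can be evaluated by testing $k = 0, 1, 2, \ldots$ in turn using rational arithmetic until the inequality first holds, and termination is guaranteed because $q^k \to 0$. The verification that $s$ is a modulus of convergence is then routine: for any $n \in \IN$ and any $m \geq s(n)$, the bound from the lemma combined with the monotonicity of $k \mapsto q^k$ yields $\sum_{k=m}^{\infty} f(k) \cdot 2^{-k} \leq q^m / (1-q) \leq q^{s(n)} / (1-q) \leq 2^{-n}$.

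The main conceptual point worth stressing is that the statement imposes no computability assumption on $f$, so the partial sums themselves are generally not computable as rationals; however, as made explicit in Section~\ref{sec:preliminaries}, computable convergence only requires a computable modulus of convergence, and such a modulus is extracted here from the rational constant $q$ alone, without any reference to $f$. I do not anticipate any serious obstacle: the only delicate point is ensuring that the parameter supplied by Lemma~\ref{lem:wurzelkriterium-abschaetzung-restglied} is \emph{rational} rather than merely real, which is precisely why that lemma was formulated with a rational bound in the first place.
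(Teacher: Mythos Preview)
Your proposal is correct and follows essentially the same approach as the paper: apply Lemma~\ref{lem:wurzelkriterium-abschaetzung-restglied} to $a_k = f(k)\cdot 2^{-k}$, extract the rational $q$, and define the modulus $s(n)$ as the least $m$ with $q^m/(1-q) \le 2^{-n}$, which is exactly your condition $q^m \le (1-q)\cdot 2^{-n}$. Your additional remark that no computability assumption on $f$ is needed is a worthwhile clarification.
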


\begin{proof}
	$\limsup_{n\to\infty} \sqrt[n]{f(n)} < 2$  implies $\limsup_{n\to\infty} \sqrt[n]{f(n) \cdot 2^{-n}} < 1$. Due to Lemma~\ref{lem:wurzelkriterium-abschaetzung-restglied}, the series $\sum_{k=0}^{\infty} f(k) \cdot 2^{-k}$ converges, and there is a rational number $q \in \left[0,1\right[$ with $\sum_{k=n}^{\infty} f(k) \cdot 2^{-k} \leq \frac{q^{n}}{1-q}$ for all $n \in \IN$. Define the function $s \colon \IN \to \IN$ with $ 
	s(n) := \min\left\{m \in \IN \mid \frac{q^{m}}{1-q} \leq 2^{-n} \right\}
	$ 
	for all $n \in \IN$. Clearly, $s$ is computable, and we claim that $s$ is a modulus of convergence for the series. Considering an arbitrary $n \in \IN$, we obtain 
	\begin{equation*}
		\sum_{k=s(n)}^{\infty} f(k) \cdot 2^{-k} \leq \frac{q^{s(n)}}{1-q} \leq 2^{-n}
	\end{equation*}
	by the definition of $s$.
\end{proof}

The following two lemmas are very important, since they are used several times in the next sections.

\begin{lem}\label{lem:limsup-g-summe-f}
	Let $f : \IN \to \IN$ be a function satisfying one of these conditions:
	\begin{itemize}
		\item $f(n) = 0$ for all $n \in \IN$
		\item $f(n) \geq 1$ for infinitely many $n \in \IN$
	\end{itemize}
	Furthermore, let $c \in \IN$ be a constant and $g \colon \IN \to \IN$ be the function defined by $g(n) := \sum_{k=0}^{n+c} f(k)$ for all $n \in \IN$. Then the following statements hold:
	\begin{enumerate}[(a)]
		\item $\limsup_{n\to\infty} \sqrt[n]{g(n)} = \limsup_{n\to\infty} \sqrt[n]{f(n)}$ \label{statement-1}
		\item If $\lim_{n\to\infty} \sqrt[n]{f(n)}$ exists, we have $\lim_{n\to\infty} \sqrt[n]{g(n)} = \lim_{n\to\infty} \sqrt[n]{f(n)}$. \label{statement-2}
	\end{enumerate}
\end{lem}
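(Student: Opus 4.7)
The plan is to dispose of the trivial case $f \equiv 0$ first: then $g \equiv 0$, both $\sqrt[n]{f(n)}$ and $\sqrt[n]{g(n)}$ vanish, and both claims hold. In the remaining case, $f(n) > 0$ for infinitely many $n$, and since $f$ is $\IN$-valued this produces a subsequence $(n_k)_k$ with $f(n_k) \geq 1$, hence $\sqrt[n_k]{f(n_k)} \geq 1$. This forces
\begin{equation*}
L := \limsup_{n\to\infty} \sqrt[n]{f(n)} \geq 1,
\end{equation*}
and that lower bound is the key to making a clean geometric estimate below.

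For statement (1), the inequality $\limsup \sqrt[n]{g(n)} \geq \limsup \sqrt[n]{f(n)}$ is immediate from the pointwise bound $g(n) \geq f(n)$, which holds because $n \leq n+c$ and all summands are nonnegative. For the reverse inequality, the case $L = \infty$ is vacuous, so I would assume $L < \infty$, fix $\varepsilon > 0$, and choose $N$ with $f(k) \leq (L+\varepsilon)^k$ for all $k \geq N$. Setting $C := \sum_{k=0}^{N-1} f(k)$ and using $L+\varepsilon > 1$ to sum a geometric tail, I split
\begin{equation*}
g(n) = C + \sum_{k=N}^{n+c} f(k) \leq C + \sum_{k=N}^{n+c} (L+\varepsilon)^k \leq C + \frac{(L+\varepsilon)^{n+c+1}}{L+\varepsilon - 1}.
\end{equation*}
Taking $n$-th roots and letting $n \to \infty$ yields $\limsup \sqrt[n]{g(n)} \leq L + \varepsilon$; since $\varepsilon > 0$ was arbitrary, $\limsup \sqrt[n]{g(n)} \leq L$.

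For statement (2), the same pointwise bound $g(n) \geq f(n)$ gives $\liminf \sqrt[n]{g(n)} \geq \liminf \sqrt[n]{f(n)}$. If $\lim_{n \to \infty} \sqrt[n]{f(n)}$ exists and equals $L$, then (1) yields $\limsup \sqrt[n]{g(n)} \leq L$ while the liminf inequality yields $\liminf \sqrt[n]{g(n)} \geq L$; hence $\lim_{n \to \infty} \sqrt[n]{g(n)}$ exists and equals $L$.

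The only subtle point—and the reason for the two hypotheses on $f$—is the requirement $L + \varepsilon > 1$ in the geometric estimate. This is where the integer-valued assumption together with $f(n) > 0$ for infinitely many $n$ enters, since it forces $L \geq 1$. Without this hypothesis the lemma can fail: if $f$ is nonzero on only finitely many arguments, then $\limsup \sqrt[n]{f(n)} = 0$ while $\sqrt[n]{g(n)} \to 1$, because $g$ becomes eventually constant with a positive value.
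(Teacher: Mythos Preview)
Your proof is correct, and statement~(2) is handled exactly as in the paper. For statement~(1), however, you take a genuinely different route. The paper bounds $g(n) \leq (n+c+1)\cdot\max_{k\leq n+c} f(k)$, uses $\sqrt[n]{n+c+1}\to 1$ to reduce to $\limsup_n \sqrt[n]{\max_{k\leq n+c} f(k)}$, and then performs a case distinction on whether $f$ is bounded (where the max is eventually constant and the answer is $1$) or unbounded (where the max is attained at arbitrarily large indices, reducing to $\limsup_n \sqrt[n]{f(n)}$). You instead exploit the observation $L\geq 1$ up front to make $L+\varepsilon>1$, bound $f(k)\leq (L+\varepsilon)^k$ beyond some $N$, and sum the resulting geometric series directly. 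Your argument is a bit slicker in that it avoids the bounded/unbounded split entirely; the paper's argument is perhaps more elementary in that it never sums a series, only compares a sum to its largest term. Both rely in essentially the same way on the hypothesis that $f$ is integer-valued and infinitely often positive (to guarantee $L\geq 1$), and your closing remark about why the lemma fails otherwise is apt.
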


\begin{proof}
	Both statements are obvious if $f$ meets the first condition. Therefore, suppose that $f$ satisfies the second condition. Due to $f(n) \leq g(n)$ for all $n \in \IN$, it is clear that the following inequalities hold:
	\begin{align}
		\liminf_{n\to\infty} \sqrt[n]{f(n)} &\leq \liminf_{n\to\infty} \sqrt[n]{g(n)} \label{fact:abschaetzung-liminf-summe} \\
		\limsup_{n\to\infty} \sqrt[n]{f(n)} &\leq \limsup_{n\to\infty} \sqrt[n]{g(n)} \label{fact:abschaetzung-limsup-summe}
	\end{align}	
	\begin{enumerate}[(a)]
		\item We only show $\limsup_{n\to\infty} \sqrt[n]{g(n)} \leq  \limsup_{n\to\infty} \sqrt[n]{f(n)}$, since the other direction is clear, due to Inequality \ref{fact:abschaetzung-limsup-summe}. First we get
		\begin{align*}
			\limsup_{n\to\infty} \sqrt[n]{g(n)} &= \limsup_{n\to\infty} \sqrt[n]{\sum_{k=0}^{n+c} f(k)} \\
			&\leq \limsup_{n\to\infty} \sqrt[n]{(n+c+1) \max_{k\in \{0, \dots, n\}} f(k)} \\
			&= \limsup_{n\to\infty} \sqrt[n]{\max_{k\in \{0, \dots, n\}} f(k)}.
		\end{align*}
		The last identity is due to $\lim_{n\to\infty} \sqrt[n]{n+c+1} = 1$. Let us perform a case distinction:
		\begin{itemize}
			\item $f$ is bounded. Let $m := \max_{k\in \IN} f(k)$. Then we have both $m \geq 1$ and $\limsup_{n\to\infty} \sqrt[n]{f(n)} = 1$, due to $f(n) \geq 1$ for infinitely many $n \in \IN$. We obtain as desired
			\begin{align*}
				\limsup_{n\to\infty} \sqrt[n]{g(n)} &\leq \limsup_{n\to\infty} \sqrt[n]{\max_{k\in \{0, \dots, n\}} f(k)} \\
				&= \limsup_{n\to\infty} \sqrt[n]{m} \\
				&= 1 \\
				&= \limsup_{n\to\infty} \sqrt[n]{f(n)}.
			\end{align*}
			\item $f$ is unbounded. In this case there exist infinitely many $n \in \IN$ with $\max_{k\in \{0, \dots, n\}} f(k) = f(n)$. We obtain as desired
			\begin{align*}
				\limsup_{n\to\infty} \sqrt[n]{g(n)} &\leq \limsup_{n\to\infty} \sqrt[n]{\max_{k\in \{0, \dots, n\}} f(k)} \\
				&= \limsup_{n\to\infty} \sqrt[n]{f(n)}.
			\end{align*}
		\end{itemize}
		\item It suffices to show $\limsup_{n\to\infty} \sqrt[n]{g(n)} \leq \liminf_{n\to\infty} \sqrt[n]{g(n)}$. We obtain:
		\begin{align*}
			\limsup_{n\to\infty} \sqrt[n]{g(n)}
			&= \limsup_{n\to\infty} \sqrt[n]{f(n)} &\text{Statement \ref{statement-1}} \\
			&= \liminf_{n\to\infty} \sqrt[n]{f(n)} &\text{$\lim_{n\to\infty} \sqrt[n]{f(n)}$ exists} \\
			&\leq \liminf_{n\to\infty} \sqrt[n]{g(n)} &\text{Inequality \ref{fact:abschaetzung-liminf-summe}}
		\end{align*}
	\end{enumerate}	
\end{proof}

It is worth mentioning that an analogous statement for the limit inferior does not hold. For example, choose the function $f\colon\IN\to\IN$ defined by
\begin{equation*}
	f(n) := \begin{cases}
		2^n &\text{if $n$ is even} \\
		1 &\text{otherwise}
	\end{cases}
\end{equation*}
and the function $g\colon\IN\to\IN$ defined by $g(n) := \sum_{k=0}^{n} f(k)$. Then we have $\liminf_{n\to\infty} \sqrt[n]{f(n)} = 1$, but $\liminf_{n\to\infty} \sqrt[n]{g(n)} = 2$.

\begin{lem}\label{lem:konvergenz-g-summe-f}
	Let $f \colon \IN \to \IN$ be a function such that the series ${\sum_{k=0}^{\infty} f(k) \cdot 2^{-k}}$ converges. Let $c \in \IN$, and define the function $g \colon \IN\to\IN$ with $g(n) := \sum_{k=0}^{n+c} f(k)$ for all $n \in \IN$. Then we have
	\begin{equation}\label{eq:restglied-g-sum-f}
		\sum_{k=n}^{\infty} g(k) \cdot 2^{-k} = 2^{-n+1} \cdot \sum_{k=0}^{n+c-1} f(k) + 2^{c+1} \cdot \sum_{k=n+c}^{\infty} f(k)\cdot 2^{-k}
	\end{equation}
	for all $n \in \IN$, hence the series $\sum_{k=0}^{\infty} g(k) \cdot 2^{-k}$ also converges.
	Furthermore, if $\sum_{k=0}^{\infty} f(k) \cdot 2^{-k}$ converges computably, then $\sum_{k=0}^{\infty} g(k) \cdot 2^{-k}$ converges computably as well.
\end{lem}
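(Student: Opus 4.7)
The plan is to first establish the identity \eqref{eq:restglied-g-sum-f} by interchanging the order of summation; since all terms are non-negative, Tonelli applies and I may write
\begin{equation*}
\sum_{k=n}^\infty g(k)\cdot 2^{-k} \;=\; \sum_{k=n}^\infty 2^{-k}\sum_{j=0}^{k+c} f(j) \;=\; \sum_{j=0}^\infty f(j)\sum_{k\geq\max(n,\,j-c)} 2^{-k}.
\end{equation*}
The inner geometric sum equals $2^{-n+1}$ when $j\leq n+c-1$ and $2^{c+1}\cdot 2^{-j}$ when $j\geq n+c$, and reassembling these two cases yields \eqref{eq:restglied-g-sum-f}. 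Specialising to $n=0$ expresses the total sum of $\sum g(k)\cdot 2^{-k}$ as $2\sum_{k=0}^{c-1} f(k) + 2^{c+1}\sum_{k=c}^\infty f(k)\cdot 2^{-k}$, which is finite by hypothesis, so the series converges.

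For the computable part, let $s\colon\IN\to\IN$ be a computable modulus of convergence for $\sum f(k)\cdot 2^{-k}$, and fix a constant $S\in\IN$ with $\sum_{k=0}^\infty f(k)\cdot 2^{-k}\leq S$. Such an $S$ exists because the limit is a fixed real number; although $S$ need not be computable uniformly from $s$, treating it as a constant suffices for the modulus constructed below to be a computable function of $m$. Given a target accuracy $2^{-m}$, I aim to force each of the two summands on the right of \eqref{eq:restglied-g-sum-f} below $2^{-m-1}$. The second summand is at most $2^{c+1}\cdot 2^{-(m+c+2)} = 2^{-m-1}$ whenever $n+c\geq s(m+c+2)$.

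The main obstacle will be the first summand $2^{-n+1}\sum_{k=0}^{n+c-1} f(k)$, since the crude estimate $f(k)\leq S\cdot 2^k$ only leaves the non-vanishing residual $S\cdot 2^{c+1}$. I would overcome this by writing $f(k) = f(k)\cdot 2^{-k}\cdot 2^k$ and splitting the sum at the threshold $r := s(m+c+2)$. For $k\leq r$ the factor $2^{k-n+1}$ is bounded by $2^{r-n+1}$, contributing at most $2^{r-n+1}\sum_{k=0}^{r} f(k)\cdot 2^{-k}\leq 2^{r-n+1}\cdot S$; for $r<k\leq n+c-1$ the factor $2^{k-n+1}$ is bounded by $2^c$, contributing at most $2^c\sum_{k>r} f(k)\cdot 2^{-k}\leq 2^c\cdot 2^{-(m+c+2)} = 2^{-m-2}$ by the choice of $r$. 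Choosing $n\geq r + m + 3 + \lceil\log_2 S\rceil$ then drops the small-$k$ piece below $2^{-m-2}$ as well, so the first summand is at most $2^{-m-1}$. Setting $t(m) := s(m+c+2) + m + 3 + \lceil\log_2 S\rceil$ will therefore provide a computable modulus of convergence for $\sum g(k)\cdot 2^{-k}$, completing the argument.
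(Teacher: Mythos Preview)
Your proof is correct and follows essentially the same route as the paper: both derive the identity by swapping the order of summation, and both handle computable convergence by bounding the second summand directly via the modulus and splitting the first summand $2^{-n+1}\sum_{k=0}^{n+c-1}f(k)$ into an ``early'' part (controlled by the total sum $S$, respectively $2^d$) and a ``late'' part (controlled by the modulus). The only cosmetic difference is in the explicit modulus: the paper takes $s(n):=2\,r(n+d+2)$, relying on $r$ being increasing so that $r(n+d+2)\geq n+d+2$, whereas your choice $t(m):=s(m+c+2)+m+3+\lceil\log_2 S\rceil$ avoids any monotonicity assumption on the given modulus.
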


\begin{proof}
	Considering an arbitrary $n \in \IN$, we obtain
	\begin{align*}
		\sum_{k=n}^{\infty} g(k) \cdot 2^{-k}	
		&= \sum_{k=n}^{\infty} \sum_{j=0}^{k+c} f(j) \cdot 2^{-k} \\
		&= \sum_{k=0}^{n+c-1} f(k) \cdot \sum_{j=n}^{\infty} 2^{-j} + \sum_{k=n+c}^{\infty} f(k) \cdot \sum_{j=k}^{\infty} 2^{-(j-c)} \\
		&= 2^{-n+1} \cdot \sum_{k=0}^{n+c-1} f(k) + 2^{c+1} \cdot \sum_{k=n+c}^{\infty} f(k)\cdot 2^{-k}.
	\end{align*}
	Now suppose that the series $\sum_{k=0}^{\infty} f(k) \cdot 2^{-k}$ converges computably. Fix some computable increasing function $r \colon \IN \to \IN$ which is a modulus of convergence for the series $\sum_{k=0}^{\infty} f(k) \cdot 2^{-k}$, and fix some natural number $d \geq c$ with $\sum_{k=0}^{\infty} f(k) \cdot 2^{-k} \leq 2^{d}$. We claim that the function $s\colon\IN\to\IN$ defined by $s(n) := 2\cdot r(n+d+2)$ is a modulus of convergence for the series $\sum_{k=0}^{\infty} g(k) \cdot 2^{-k}$.
	
	First we observe
	\begin{align*}
		2^{-s(n)+1} \cdot \sum_{k=0}^{r(n+d+2)-1} f(k)
		&= 2^{-s(n)+1} \cdot \sum_{k=0}^{r(n+d+2)-1} 2^k \cdot f(k) \cdot 2^{-k} \\
		&\leq 2^{-s(n)+1} \cdot \sum_{k=0}^{r(n+d+2)-1} 2^{r(n+d+2)-1} \cdot f(k) \cdot 2^{-k} \\
		&= 2^{-r(n+d+2)} \cdot \sum_{k=0}^{r(n+d+2)-1} f(k) \cdot 2^{-k} \\
		&\leq 2^{-(n+d+2)} \cdot 2^{d} \\
		&= 2^{-(n+2)}
	\end{align*}
	for all $n \in \IN$ and
	\begin{align*}
		2^{-s(n)+1} \cdot \sum_{k=r(n+d+2)}^{s(n)+d-1} f(k)
		&= 2^{-s(n)+1} \cdot \sum_{k=r(n+d+2)}^{s(n)+d-1} 2^k \cdot f(k) \cdot 2^{-k} \\
		&\leq 2^{-s(n)+1} \cdot \sum_{k=r(n+d+2)}^{s(n)+d-1} 2^{s(n)+d-1} \cdot f(k) \cdot 2^{-k} \\
		&= 2^{d} \cdot \sum_{k=r(n+d+2)}^{s(n)+d-1} f(k) \cdot 2^{-k} \\
		&\leq 2^{d} \cdot 2^{-(n+d+2)} \\
		&= 2^{-(n+2)}
	\end{align*}
	for all $n \in \IN$. Combining these two inequalities, we obtain
	\begin{equation}\label{ineq:abschaetzung-prolog}
		2^{-s(n)+1} \cdot \sum_{k=0}^{s(n)+d-1} f(k) \leq 2^{-(n+1)}
	\end{equation}
	for all $n \in \IN$. Using Equation \ref{eq:restglied-g-sum-f} and Inequality \ref{ineq:abschaetzung-prolog}, we finally get
	\begin{align*}
		\sum_{k=s(n)}^{\infty} g(k) \cdot 2^{-k}
		&= 2^{-s(n)+1} \cdot \sum_{k=0}^{s(n)+c-1} f(k) + 2^{c+1} \cdot \sum_{k=s(n)+c}^{\infty} f(k)\cdot 2^{-k} \\
		&\leq 2^{-s(n)+1} \cdot \sum_{k=0}^{s(n)+d-1} f(k) + 2^{d+1} \cdot \sum_{k=s(n)}^{\infty} f(k)\cdot 2^{-k} \\
		&\leq 2^{-(n+1)} + 2^{-(n+1)} \\
		&= 2^{-n}
	\end{align*}
	for all $n \in \IN$. Since $s$ is computable, the series $\sum_{k=0}^{\infty} g(k) \cdot 2^{-k}$ converges computably.
\end{proof}

In contrast to the previous two lemmas, the next lemma is easy to prove. It is used in Theorem~\ref{satz:hierarchiesatz-roc}, which is our last main result of this article, and it is the last lemma in this section.

\begin{lem}\label{prop:berechenbare-konvergenz-berechenbar-approximierbar}
	For every computably approximable number $\rho \in \left[1, 2\right]$ there exists a computable function $r \colon \IN \to \IN$ which tends to infinity and satisfies the following properties:
	\begin{enumerate}[(1)]
		\item The series $\sum_{k=0}^{\infty} r(k) \cdot 2^{-k}$ converges computably.
		\item The sequence $\left(\sqrt[n]{r(n)}\right)_n$ converges to $\rho$.
	\end{enumerate}
\end{lem}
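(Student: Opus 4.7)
My plan is to fix a computable sequence $(q_n)_n$ of rationals in $\left[1, 2\right]$ with $q_n \to \rho$ (obtained by clamping any computable approximation of $\rho$ to $\left[1,2\right]$), and to define
\begin{equation*}
	r(n) := \max\!\left\{h(n),\; \min\!\left\{\lfloor q_n^n \rfloor,\; \lfloor 2^n/(n+1)^2 \rfloor\right\}\right\},
\end{equation*}
where $h\colon \IN \to \IN$ is a slowly growing computable function, for instance $h(n) := \lfloor \log_2(n+2)\rfloor$. Each of the three ingredients plays a distinct role: $\lfloor q_n^n\rfloor$ is what forces $\sqrt[n]{r(n)}$ to approach $\rho$; the cap $\lfloor 2^n/(n+1)^2\rfloor$ is a safety net ensuring computable summability even when $\rho = 2$; and $h(n)$ guarantees that $r$ tends to infinity even when $\rho = 1$ and $\lfloor q_n^n\rfloor$ might stay bounded.

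Computability of $r$ is immediate, and $r(n) \geq h(n) \to \infty$ shows that $r$ tends to infinity. For the computable convergence of $\sum_k r(k) \cdot 2^{-k}$ I would use the pointwise bound $r(k) \leq h(k) + 2^k/(k+1)^2$, which yields $r(k) \cdot 2^{-k} \leq h(k) \cdot 2^{-k} + 1/(k+1)^2$. Both series on the right converge computably: the first by Lemma~\ref{lem:berechenbare-konvergenz-dyadische-reihe} (since $\limsup_n \sqrt[n]{h(n)} = 1 < 2$), the second by a direct polynomial tail bound. Summing two computable moduli gives a computable modulus for $\sum_k r(k) \cdot 2^{-k}$.

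The heart of the argument is to verify $\sqrt[n]{r(n)} \to \rho$. For the upper bound, the inequality $r(n) \leq \max(h(n), q_n^n)$ gives $r(n)^{1/n} \leq \max(h(n)^{1/n}, q_n) \to \max(1, \rho) = \rho$, so $\limsup_n \sqrt[n]{r(n)} \leq \rho$. For the lower bound I would split into three cases. If $\rho = 1$, the bound $r(n) \geq h(n) \geq 1$ already yields $\liminf_n \sqrt[n]{r(n)} \geq 1 = \rho$. If $1 < \rho < 2$, pick $\vep > 0$ with $\rho + \vep < 2$; for $n$ large, $q_n \in \left[(\rho+1)/2, \rho + \vep\right]$, so $q_n^n$ grows exponentially while $(\rho+\vep)^n (n+1)^2 / 2^n \to 0$. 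Hence the inner $\min$ eventually equals $\lfloor q_n^n \rfloor$ and exceeds $h(n)$, so $r(n) = \lfloor q_n^n \rfloor$ eventually and $r(n)^{1/n} \geq (q_n^n - 1)^{1/n} \to \rho$.

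The main obstacle is the case $\rho = 2$, since $q_n \to 2$ may be arbitrarily slow and the inner $\min$ may alternate unpredictably between its two arguments. The saving observation is that for every $\vep > 0$, eventually $q_n > 2 - \vep$, hence $q_n^n > (2 - \vep)^n$; and also $2^n/(n+1)^2 \geq (2 - \vep)^n$ for $n$ large, because $(1 - \vep/2)^n (n+1)^2 \to 0$. Both arguments of the inner $\min$ are therefore at least $(2 - \vep)^n$ eventually, so $r(n)^{1/n} \geq (2 - \vep)(1 - o(1))$, yielding $\liminf_n \sqrt[n]{r(n)} \geq 2 - \vep$. Letting $\vep \to 0$ completes the proof.
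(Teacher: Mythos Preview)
Your argument is correct. The construction is well-defined and computable, the lower bound $r(n)\geq h(n)$ gives unboundedness, the upper bound $r(n)\leq h(n)+2^n/(n+1)^2$ combined with Lemma~\ref{lem:berechenbare-konvergenz-dyadische-reihe} gives computable convergence of the series, and your three-case analysis for $\sqrt[n]{r(n)}\to\rho$ goes through as written.

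The paper takes a different, more direct route: it simply splits into the three cases $\rho=1$, $\rho=2$, and $1<\rho<2$, and writes down an explicit formula in each case (respectively $r(n)=n$, $r(n)=\lceil 2^n/n^2\rceil$, and $r(n)=\lceil\rho_n^n\rceil$ for a computable rational approximation $(\rho_n)_n$). Each of the three verifications is then essentially a one-liner, with the middle case handled by Lemma~\ref{lem:berechenbare-konvergenz-dyadische-reihe}. Your approach trades this case split for a single uniform formula that superimposes all three mechanisms (a floor $h(n)$ for unboundedness, $\lfloor q_n^n\rfloor$ for the correct growth, and the cap $\lfloor 2^n/(n+1)^2\rfloor$ for summability at $\rho=2$); the price is that you must still do a three-case analysis to check $\sqrt[n]{r(n)}\to\rho$, so the overall workload is comparable. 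The paper's version is shorter to verify; yours has the aesthetic advantage of a single definition valid for all~$\rho$.
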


\begin{proof}
	In order to define $r \colon \IN \to \IN$, we consider three distinct cases:
	\begin{itemize}
		\item If $\rho = 1$, we define $r(n) := n$ for all $n \in \IN$.
		\item If $\rho = 2$, we define
		\begin{equation*}
			r(n) := \begin{cases}
				0 &\text{if $n = 0$} \\
				\lceil \frac{2^n}{n^2} \rceil &\text{otherwise}
			\end{cases}
		\end{equation*}
		for all $n \in \IN$.
		\item In all other cases we fix a computable non-negative sequence of rational numbers $(\rho_n)_n$ converging to $\rho$ and define $r(n) := \lceil \rho_n^n \rceil$ for all $n \in \IN$.
	\end{itemize}
	In all cases, $r$ is computable and tends to infinity, the series $\sum_{k=0}^{\infty} r(k) \cdot 2^{-k}$ converges computably and we have $\lim_{n\to\infty} \sqrt[n]{r(n)} = \rho$.
\end{proof}

\section{Closure properties}\label{sec:closure-properties}

In this section we will show that the reordered computable numbers are closed under addition and multiplication, and we will also show that they are closed downwards under the Solovay reduction.

\begin{satz}\label{prop:closure-sum}
	Let $x$ and $y$ be reordered computable numbers. Then the sum $x + y$ is also reordered computable.
\end{satz}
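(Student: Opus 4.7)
The plan is to take computable names $f$ and $g$ for $x$ and $y$ witnessing the reordered computability of each, and to build a single computable name $h$ for $x+y$ by simply interleaving them. Concretely, define $h \colon \IN \to \IN$ by $h(2k) := f(k)$ and $h(2k+1) := g(k)$. Since $h$ is clearly computable and all terms are non-negative, absolute convergence lets us split the series:
\begin{equation*}
	\sum_{k=0}^{\infty} 2^{-h(k)} = \sum_{k=0}^{\infty} 2^{-f(k)} + \sum_{k=0}^{\infty} 2^{-g(k)} = x + y,
\end{equation*}
so $h$ is a computable name for $x+y$.

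The key observation is that the counting function behaves additively under this interleaving. For each $n \in \IN$, the preimage of $n$ under $h$ decomposes into the even-indexed preimages (in bijection with $f^{-1}(n)$) and the odd-indexed preimages (in bijection with $g^{-1}(n)$), whence $u_h(n) = u_f(n) + u_g(n)$. Consequently,
\begin{equation*}
	\sum_{k=0}^{\infty} u_h(k) \cdot 2^{-k} = \sum_{k=0}^{\infty} u_f(k) \cdot 2^{-k} + \sum_{k=0}^{\infty} u_g(k) \cdot 2^{-k}.
\end{equation*}

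By Definition~\ref{def:reordered-computable-number}, the two series on the right converge computably. Let $s_f$ and $s_g$ be computable moduli of convergence for these two series respectively, and define $s \colon \IN \to \IN$ by $s(n) := \max\{s_f(n+1),\, s_g(n+1)\}$. Then $s$ is computable, and for every $n \in \IN$ the tail $\sum_{k=s(n)}^{\infty} u_h(k) \cdot 2^{-k}$ is bounded above by $2^{-(n+1)} + 2^{-(n+1)} = 2^{-n}$, so $s$ is a modulus of computable convergence for $\sum_{k=0}^{\infty} u_h(k) \cdot 2^{-k}$. Hence $x+y$ is reordered computable.

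There is essentially no obstacle here: the additivity of $u_f$ under disjoint-union merging of names turns the problem into the trivial fact that the sum of two computably convergent non-negative series is computably convergent. The only subtlety worth flagging in the writeup is the routine verification that $u_h = u_f + u_g$, which depends on the fact that the even and odd index sets are disjoint and together exhaust $\IN$.
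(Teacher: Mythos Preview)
Your proof is correct and essentially identical to the paper's: the same interleaving $h(2k)=f(k)$, $h(2k+1)=g(k)$, the same observation $u_h=u_f+u_g$, and the same modulus $\max\{s_f(n+1),s_g(n+1)\}$. If anything, you give slightly more justification (for $\sum 2^{-h(k)}=x+y$ and for $u_h=u_f+u_g$) than the paper does.
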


\begin{proof}
	Fix computable names $f, g \colon \IN \to \IN$ witnessing the reordered computability of $x$ and $y$, respectively. Also fix computable functions $r, s \colon \IN \to \IN$ which are a modulus of convergence for the series $\sum_{k=0}^{\infty} u_f(k) \cdot 2^{-k}$ and  $\sum_{k=0}^{\infty} u_g(k) \cdot 2^{-k}$, respectively. Define the function $h \colon \IN \to \IN$ with $h(2n) := f(n)$ and $h(2n+1) := g(n)$ for all $n \in \IN$. Obviously, $h$ is a computable name for $x + y$. We also observe $u_h(n) = u_f(n) + u_g(n)$ for all $n \in \IN$. Define the function $t \colon \IN \to \IN$ with $t(n) := \max\{r(n+1), s(n+1)\}$ for all $n \in \IN$. Since $r$ and $s$ are both computable, $t$ is computable as well. We claim that $t$ is a modulus of convergence for the series $\sum_{k=0}^{\infty} u_{h}(k) \cdot 2^{-k}$. Considering some $n \in \IN$, we finally obtain
	\begin{equation*}
		\sum_{k=t(n)}^{\infty} u_{h}(k) \cdot 2^{-k} 
		= \sum_{k=t(n)}^{\infty} u_{f}(k) \cdot 2^{-k} + \sum_{k=t(n)}^{\infty} u_{g}(k) \cdot 2^{-k} \\
		\leq 2^{-(n+1)} + 2^{-(n+1)}  \\
		= 2^{-n}.
	\end{equation*}
	Thus, $x + y$ is a reordered computable number.
\end{proof}

\begin{satz}\label{prop:closure-product}
	Let $x$ and $y$ be reordered computable numbers. Then the product $x \cdot y$ is also reordered computable.
\end{satz}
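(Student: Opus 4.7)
The plan is to realize the product $xy$ as a dyadic series indexed by all pairs and to recognize the counting function of the resulting name as a convolution. Fix computable names $f, g \colon \IN \to \IN$ witnessing the reordered computability of $x$ and $y$, together with computable moduli of convergence $r$ and $s$ for the series $\sum_{k=0}^{\infty} u_f(k) \cdot 2^{-k}$ and $\sum_{k=0}^{\infty} u_g(k) \cdot 2^{-k}$. Using a computable bijection $\langle \cdot, \cdot \rangle \colon \IN \times \IN \to \IN$, I define $h \colon \IN \to \IN$ by $h(\langle i, j\rangle) := f(i) + g(j)$. Then $h$ is computable and by distributivity
\begin{equation*}
    \sum_{k=0}^{\infty} 2^{-h(k)} = \sum_{i=0}^{\infty}\sum_{j=0}^{\infty} 2^{-f(i)} \cdot 2^{-g(j)} = x \cdot y,
\end{equation*}
so $h$ is a computable name for $xy$.

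The key observation is that $u_h$ is the convolution of $u_f$ and $u_g$: for every $n \in \IN$,
\begin{equation*}
    u_h(n) = \bigl|\{(i, j) \in \IN^2 \mid f(i) + g(j) = n\}\bigr| = \sum_{m=0}^{n} u_f(m) \cdot u_g(n - m).
\end{equation*}
Consequently the series $\sum_{k=0}^{\infty} u_h(k) \cdot 2^{-k}$ is precisely the Cauchy product of the two series $\sum_{k=0}^{\infty} u_f(k) \cdot 2^{-k}$ and $\sum_{k=0}^{\infty} u_g(k) \cdot 2^{-k}$, and in particular it converges (to the product of their values, which is a positive real number).

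What remains is to construct a computable modulus of convergence for this Cauchy product, which is the main technical step. I would split the tail using the inclusion $\{(i,j) \mid i + j \geq N\} \subseteq \{i \geq \lceil N/2 \rceil\} \cup \{j \geq \lceil N/2\rceil\}$, which yields
\begin{equation*}
    \sum_{k=N}^{\infty} u_h(k) \cdot 2^{-k} \;\leq\; Y \cdot \sum_{i \geq \lceil N/2 \rceil} u_f(i) \cdot 2^{-i} \;+\; X \cdot \sum_{j \geq \lceil N/2 \rceil} u_g(j) \cdot 2^{-j},
\end{equation*}
where $X := \sum_{k=0}^{\infty} u_f(k) \cdot 2^{-k}$ and $Y := \sum_{k=0}^{\infty} u_g(k) \cdot 2^{-k}$. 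Since $r$ and $s$ are computable, one can compute rational upper bounds $X' \geq X$ and $Y' \geq Y$ (for instance, evaluating the partial sum up to $r(0)$ and adding $1$, and similarly for $Y$). Choose $c \in \IN$ with $X', Y' \leq 2^c$ and define $t(n) := 2 \cdot \max\{r(n + c + 1), s(n + c + 1)\}$. Then $t$ is computable and, by the estimate above, is a modulus of convergence for $\sum_{k=0}^{\infty} u_h(k) \cdot 2^{-k}$. The main obstacle is really just this bookkeeping: verifying that the multiplicative constants $X$ and $Y$ can be bounded effectively and combining the two moduli correctly; everything else follows from the identification of $u_h$ as a convolution.
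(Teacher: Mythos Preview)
Your proof is correct and follows essentially the same strategy as the paper's: build a name for $xy$ whose counting function $u_h$ is the convolution $u_f \ast u_g$, then control the tail of the resulting Cauchy product by a midpoint split, arriving at the same modulus $t(n)=2\max\{r(n+c+O(1)),\,s(n+c+O(1))\}$. The only cosmetic difference is that you verify characterization~(3) of Proposition~\ref{prop:characterisierung-roc-number} directly (note incidentally that $X=\sum_k u_f(k)2^{-k}=x$ and $Y=y$, so the separate rational bounds are not even needed), whereas the paper passes through the sorted names $f^\ast,g^\ast$ and characterization~(2); your route is, if anything, a little more direct.
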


\begin{proof}
	Fix computable functions $f, g \colon \IN \to \IN$ witnessing the reordered computability of $x$ and $y$, respectively, and computable functions $r, s \colon \IN \to \IN$ which are a modulus of convergence for the reordered series $\sum_{k=0}^{\infty} 2^{-f^\ast(k)}$ and $\sum_{k=0}^{\infty} 2^{-g^\ast(k)}$, respectively. From the Cauchy product formula
	\begin{equation}
		x \cdot y = \sum_{k=0}^{\infty} 2^{-f(k)} \cdot \sum_{k=0}^{\infty} 2^{-g(k)} = \sum_{\ell=0}^{\infty} \sum_{k=0}^{\ell} 2^{-f(k)} \cdot 2^{-g(\ell-k)}
	\end{equation}
	one can construct a computable function $h \colon \IN \to\IN$ with $\sum_{k=0}^{\infty} 2^{-h(k)} = x \cdot y$.
	We also observe $u_h(n) = \sum_{k=0}^{n} u_f(k) \cdot u_g(n-k)$ for all $n \in \IN$.
	Note that we also have $\sum_{\ell=0}^{\infty} \sum_{k=0}^{\ell} 2^{-f^\ast(k)} \cdot 2^{-g^\ast(\ell-k)} = x \cdot y$.  Fix some constant $c \in \IN$ with $2^{c} \geq \max\{x,y\}$.
	First we claim that we have
	\begin{equation}
		\sum_{k=0}^{\ell} 2^{-f^\ast(k)} \cdot  2^{-g^\ast(\ell-k)} \leq 2^c \cdot \left(2^{-f^\ast\left(\ell/2\right)} + 2^{-g^\ast\left(\ell/2\right)} \right)
	\end{equation}
	for all $\ell \in \IN$. Considering some $\ell \in \IN$, we obtain
	\begin{align*}
		\sum_{k=0}^{\ell} 2^{-f^\ast(k)} \cdot  2^{-g^\ast(\ell-k)}
		&= \sum_{k=0}^{\ell/2} 2^{-f^\ast(k)} \cdot  2^{-g^\ast(\ell-k)} + \sum_{k=\ell/2 + 1}^{\ell} 2^{-f^\ast(k)} \cdot  2^{-g^\ast(\ell-k)} \\
		&\leq \sum_{k=0}^{\ell/2} 2^{-f^\ast(k)} \cdot  2^{-g^\ast\left(\ell/2\right)} + \sum_{k=\ell/2 + 1}^{\ell} 2^{-f^\ast\left(\ell/2\right)} \cdot  2^{-g^\ast(\ell-k)} \\ 
		&= 2^{-g^\ast\left(\ell/2\right)} \cdot  \sum_{k=0}^{\ell/2} 2^{-f^\ast(k)} + 2^{-f^\ast\left(\ell/2\right)} \cdot \sum_{k=\ell/2 + 1}^{\ell}   2^{-g^\ast(\ell-k)} \\
		&\leq 2^{-g^\ast\left(\ell/2\right)} \cdot  2^c + 2^{-f^\ast\left(\ell/2\right)} \cdot 2^c \\
		&= 2^c \cdot \left(2^{-f^\ast\left(\ell/2\right)} + 2^{-g^\ast\left(\ell/2\right)} \right).
	\end{align*}
	Finally we claim that the function $t \colon \IN \to \IN$ defined by
	\begin{equation*}
		t(n) := 2\cdot \max\{r(n+c+2), s(n+c+2)\}
	\end{equation*}
	is a modulus of convergence of the series $\sum_{k=0}^{\infty} 2^{-h^\ast(k)}$. According to Proposition~\ref{prop:characterisierung-roc-number}, it is sufficient to show computable convergence for some arbitrary rearrangement of this series. So we get
	\begin{align*}
		\sum_{\ell=t(n)}^{\infty} \sum_{k=0}^{\ell} 2^{-f^\ast(k)} \cdot  2^{-g^\ast(l-k)} 
		&\leq \sum_{\ell=t(n)}^{\infty} 2^c \cdot \left(2^{-f^\ast\left(\ell/2\right)} + 2^{-g^\ast\left(\ell/2\right)} \right) \\
		&\leq 2^c \cdot \left(\sum_{\ell=t(n)}^{\infty}  2^{-f^\ast\left(\ell/2\right)} + \sum_{\ell=t(n)}^{\infty}  2^{-g^\ast\left(\ell/2\right)}\right) \\
		&\leq 2^c \cdot \left(2 \cdot \sum_{\ell=t(n)/2}^{\infty} 2^{-f^\ast(\ell)} + 2\cdot \sum_{\ell=t(n)/2}^{\infty} 2^{-g^\ast(\ell)}\right) \\
		&\leq 2^c \cdot \left(2\cdot 2^{-(n+c+2)} + 2\cdot 2^{-(n+c+2)}\right) \\
		&= 2^c \cdot 2^2 \cdot 2^{-(n+c+2)} \\
		&= 2^{-n}.
	\end{align*}
	Since $t$ is also computable, $x \cdot y$ is a reordered computable number.
\end{proof}

Finally, we show that reordered computable numbers are closed downwards under the Solovay reduction, which is a relation defined on the set of the left-computable numbers. We will use a simplified definition. The proof for the equivalence with the original definition can be found in \cite{CHKW01}.

\begin{defi}[Solovay \cite{Sol1975}]
	Let $x$ and $y$ be left-computable numbers. It is said that $x$ is \emph{Solovay reducible} to $y$ (written $x \leq_S y$ for short) if there exist computable increasing sequences of rational numbers $(x_n)_n$ and $(y_n)_n$ converging to $x$ and $y$, respectively, and a constant $c > 0$ with
	\begin{equation*}
		x - x_{n} \leq c \cdot \left(y - y_n\right)
 	\end{equation*}
 	for all $n \in \IN$.
\end{defi}

It is obvious that the Solovay reduction is reflexive and it is easy to verify that it is transitive. Therefore, this relation is a pre-order. 
If we have both $x \leq_S y$ and $y \leq_S x$, we write $x \equiv_S y$. This is an equivalence relation and the equivalence classes are called \emph{Solovay degrees}. Furthermore, this partial order is an upper semilattice. For left-computable numbers $\alpha$ and $\beta$, the least upper bound of the degrees $\left[\alpha\right]_S$ and $\left[\beta\right]_S$ is simply $\left[\alpha + \beta\right]_S = \left[\alpha \cdot \beta\right]_S$. There is a smallest degree, the computable numbers. There is also a largest degree. Solovay showed \cite{Sol1975} that its elements are Martin-Löf random, and Kucera and Slaman \cite{KS2001} showed that all Martin-Löf random numbers are elements of this degree. Therefore, the Martin-Löf random numbers form the largest degree.

The proof that the reordered computable numbers are closed downwards under the Solovay reduction uses the following lemma by Downey, Hirschfeldt and Nies \cite{DHN2002}.

\begin{lem}[Downey, Hirschfeldt, Nies \cite{DHN2002}]\label{prop:characterisierung-solovay-reduktion}
	Let $x$ and $y$ be left-computable numbers with $x \leq_S y$, and let $(y_n)_n$ be a computable increasing sequence of rational numbers converging to $y$. There exist a computable increasing sequence of rational numbers $(x_n)_n$ converging to $x$ and a constant $c \in \IN$ with
	\begin{equation*}
		x_{n+1} - x_n \leq 2^c \cdot \left(y_{n+1} - y_n\right)
	\end{equation*}
	for all $n \in \IN$.
\end{lem}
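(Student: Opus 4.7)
The plan is to feed the given sequence $(y_n)_n$ into the definition of $\leq_S$ to obtain a computable increasing sequence $(\tilde{x}_n)_n$ of rationals with $\tilde{x}_n \to x$ and a real constant $c' > 0$ satisfying $x - \tilde{x}_n \leq c'(y - y_n)$ for all $n \in \IN$, and then to replace $(\tilde{x}_n)_n$ by a ``slowed-down'' companion whose successive differences are capped by those of $(y_n)_n$. Concretely, I would fix $c \in \IN$ with $2^c \geq c'$, write $C := 2^c$, and define $(x_n)_n$ greedily by $x_0 := \tilde{x}_0$ and
\[ x_{n+1} := \min\bigl(\tilde{x}_{n+1},\; x_n + C \cdot (y_{n+1} - y_n)\bigr). \]
Computability of $(x_n)_n$ is immediate from those of $(\tilde{x}_n)_n$ and $(y_n)_n$, the desired per-step bound $x_{n+1} - x_n \leq C \cdot (y_{n+1} - y_n)$ is built into the construction, and an easy induction gives $x_n \leq \tilde{x}_n$, from which strict monotonicity of $(x_n)_n$ follows via the strict monotonicity of $(\tilde{x}_n)_n$.

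The remaining and actual work is to prove $x_n \to x$. My plan is to introduce the ``debt'' $D_n := \tilde{x}_n - x_n \geq 0$ and the ``slack'' $E_n := C \cdot (y - y_n) - (x - \tilde{x}_n)$, which is non-negative because $C \geq c'$ yields $E_n \geq (C - c')(y - y_n) \geq 0$ and which tends to $0$ as $n \to \infty$. If I can verify the invariant $D_n \leq E_n$, then
\[ x - x_n \;=\; (x - \tilde{x}_n) + D_n \;\leq\; (x - \tilde{x}_n) + E_n \;=\; C \cdot (y - y_n) \;\longrightarrow\; 0, \]
which finishes the proof.

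To prove $D_n \leq E_n$ I would induct on $n$, starting from the trivial base case $D_0 = 0 \leq E_0$. For the inductive step, two cases arise depending on which argument wins the minimum defining $x_{n+1}$. If $\tilde{x}_{n+1}$ is the minimum, then $D_{n+1} = 0$ and the inequality holds for free. Otherwise $x_{n+1} = x_n + C \cdot (y_{n+1} - y_n)$, and expanding $D_{n+1}$ and $E_{n+1}$ in terms of $D_n$, $E_n$, $\tilde{x}_{n+1} - \tilde{x}_n$, and $y_{n+1} - y_n$ shows that the two ``correction'' terms $(\tilde{x}_{n+1} - \tilde{x}_n) - C \cdot (y_{n+1} - y_n)$ cancel, yielding $D_{n+1} - E_{n+1} = D_n - E_n$ so the invariant propagates.

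I expect the main obstacle to be conceptual rather than computational: one has to guess the right invariant $D_n \leq E_n$, i.e.\ to couple the greedy cap against $(y_n)_n$ with the absolute Solovay bound via $E_n$. Once this is identified the algebra of the inductive step is just a rearrangement, and strict monotonicity, computability, and the per-step cap all fall out of the definition of $(x_n)_n$ directly.
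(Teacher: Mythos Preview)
The paper does not supply its own proof of this lemma; it is stated with attribution to Downey, Hirschfeldt and Nies \cite{DHN2002} and used as a black box in the proof of Theorem~\ref{satz:closure-roc-solovay}. So there is nothing in the paper to compare against.

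That said, your argument is correct. The greedy cap $x_{n+1} := \min\bigl(\tilde{x}_{n+1},\, x_n + C(y_{n+1}-y_n)\bigr)$ immediately gives computability, the per-step bound, and (via $x_n \le \tilde{x}_n$ together with strict monotonicity of $(\tilde{x}_n)_n$ and $(y_n)_n$) strict monotonicity of $(x_n)_n$. The invariant $D_n \le E_n$ with $D_n = \tilde{x}_n - x_n$ and $E_n = C(y-y_n) - (x-\tilde{x}_n)$ is exactly the right bookkeeping: in the capped case both sides change by the same amount $(\tilde{x}_{n+1}-\tilde{x}_n) - C(y_{n+1}-y_n)$, so $D_{n+1}-E_{n+1} = D_n - E_n$, and in the uncapped case $D_{n+1}=0$. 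From $D_n \le E_n$ you get $x - x_n \le C(y-y_n) \to 0$, which is in fact slightly stronger than mere convergence. Nothing is missing.
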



\begin{satz}\label{satz:closure-roc-solovay}
	Let $x$ and $y$ be left-computable numbers with $x \leq_S y$. If $y$ is reordered computable, then $x$ is reordered computable as well.
\end{satz}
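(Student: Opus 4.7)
My plan is to start from a computable name $g$ for $y$ witnessing its reordered computability, use the Solovay connection to control the dyadic expansions of the increments of a computable approximation of $x$, and then read off a computable name $f$ for $x$ whose counting function $u_f$ can be dominated by a shifted version of $u_g$; the computable convergence of $\sum_m u_f(m)\cdot 2^{-m}$ will then follow from Lemma \ref{lem:konvergenz-g-summe-f}.

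Concretely, the partial sums $y_n := \sum_{k<n} 2^{-g(k)}$ form a computable increasing rational sequence converging to $y$ with $y_{n+1}-y_n = 2^{-g(n)}$, and Lemma \ref{prop:characterisierung-solovay-reduktion} applied to $x \leq_S y$ yields a computable increasing rational sequence $(x_n)_n$ converging to $x$ together with a constant $c \in \IN$ such that $x_{n+1}-x_n \leq 2^{c-g(n)}$ for all $n$. Since $g$ tends to infinity, I can absorb the finitely many stages with $g(n) \leq c$ into $x_0$ and assume $g(n) \geq c+1$ for every $n$, so that each increment lies in $[0, 1/2]$ and admits a binary expansion
\[
x_{n+1} - x_n = \sum_{j=1}^{\infty} b_{n,j} \cdot 2^{-(g(n)-c+j)}, \qquad b_{n,j} \in \{0,1\},
\]
computable uniformly in $n$ and $j$. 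I would then define $f \colon \IN \to \IN$ as a computable enumeration of (i) the exponents appearing in the binary expansion of the rational $x_0$, and (ii) all exponents $g(n)-c+j$ for which $b_{n,j}=1$; if this enumeration is finite, then $x$ is a computable dyadic rational and the claim is trivial. By construction, $f$ is a computable name for $x$.

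The key estimate is the bound on $u_f(m)$. An exponent $m$ is enumerated from stage $n$ only if $g(n)+j = m+c$ for some $j \geq 1$, which forces $g(n) \leq m+c-1$; combined with a constant contribution $C \in \IN$ from the expansion of $x_0$, this gives $u_f(m) \leq C + \sum_{k=0}^{m+c-1} u_g(k)$ for every $m$. Since $\sum_k u_g(k)\cdot 2^{-k}$ converges computably, Lemma \ref{lem:konvergenz-g-summe-f} applied to $u_g$ with the constant $c-1$ ensures that $\sum_m \bigl(\sum_{k=0}^{m+c-1} u_g(k)\bigr)\cdot 2^{-m}$ converges computably, and therefore so does $\sum_m u_f(m)\cdot 2^{-m}$, proving that $x$ is reordered computable.

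The main obstacle I expect is the combinatorial bookkeeping around the binary decomposition of the (possibly non-dyadic) rational jumps $x_{n+1}-x_n$: it is important to reduce to the regime $g(n) \geq c+1$ before expanding, so that every bit used contributes an exponent strictly above the threshold $g(n)-c$, and one must arrange the enumeration of the resulting exponents so that $f$ is genuinely computable as a total function and not merely a c.e.\ set. Once these points are arranged, the closure proof reduces to a clean application of the technical lemma on partial sums.
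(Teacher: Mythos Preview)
Your proposal is correct and follows essentially the same route as the paper: obtain a computable name $g$ for $y$, apply Lemma~\ref{prop:characterisierung-solovay-reduktion} to bound the increments of an approximation of $x$ by $2^{c-g(n)}$, decompose these increments into dyadic pieces to produce a name $f$ for $x$, bound $u_f(m)$ by a partial sum of $u_g$, and finish with Lemma~\ref{lem:konvergenz-g-summe-f}. The only difference is cosmetic---the paper decomposes each increment by a greedy finite approximation (incurring an extra additive $n$ in the bound on $u_f(n)$), whereas you use the full binary expansion of each increment and interleave, which yields the slightly cleaner bound $u_f(m)\leq C+\sum_{k\leq m+c-1}u_g(k)$; both feed into Lemma~\ref{lem:konvergenz-g-summe-f} in the same way.
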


\begin{proof}
	Fix some computable name $g \colon \IN \to \IN$ witnessing the reordered computability of $y$. According to Lemma \ref{prop:characterisierung-solovay-reduktion}, there exist a computable increasing sequence of rational numbers $(x_n)_n$ converging to $x$ and a constant $c \in \IN$ with $x_{n+1} - x_n \leq 2^{-g(n)+c}$ for all $n \in \IN$. We want to show that $x$ is also reordered computable. The following greedy algorithm recursively computes a function $f \colon \IN \to \IN$ in infinitely many stages such that multiple values can be enumerated in one stage.
	At every stage $t \in \IN$, we enumerate the next $\ell_t$ values for $f$, denoted by $m_{t,1}, \dots, m_{t,\ell_t}$. Our goal is that at the end of stage~$t$ we have 
	$\sum_{s=0}^{t} \sum_{k=1}^{\ell_s} 2^{-m_{s,k}} \in \left]x_{t+1} - 2^{-(t+1)}, x_{t+1} \right]$.
	To achieve this, we one by one enumerate the smallest number which does not yet violate this condition. It is clear that this property will hold after the enumeration of finitely many such numbers. 
%
%
%
%
	Thus, $f$ is well-defined, and we have $\sum_{k=0}^{\infty} 2^{-f(k)} = x$. Therefore, $f$ is a computable name for $x$. 
	
	We still have to show that the series $\sum_{k=0}^{\infty} u_f(k) \cdot 2^{-k}$ converges computably. By construction, the only number which can be enumerated by $f$ for several times during the same stage is $0$. The only stages this can happen are stages when some number $m \in \{0, \dots, c\}$ is enumerated by $g$. 
	Any other number $n \geq 1$ can be enumerated by $f$ in one of these two cases:
	\begin{itemize}
		\item $f$ enumerates $n$ due to the remainder of $x_{t+1} - \sum_{s=0}^{t} \sum_{k=1}^{\ell_s} 2^{-m_{s,k}}$, which is less than $2^{-(t+1)}$ by construction, at some stage $t < n$.
		\item $g$ enumerates some number $m \in \{0, \dots, n+c\}$ at any stage.
	\end{itemize}
	This implies 
	\begin{equation*}
		u_f(n) \leq n + \sum_{k=0}^{n+c} u_g(k)
	\end{equation*}
	 for all $n \geq 1$. Since the series $\sum_{k=0}^{\infty} u_g(k) \cdot 2^{-k}$ converges computably, the series  $\sum_{k=0}^{\infty} u_f(k) \cdot 2^{-k}$ converges computably as well, according to Lemma~\ref{lem:konvergenz-g-summe-f}. Therefore, $x$ is reordered computable.
\end{proof}

\section{Counterexamples}\label{sec:counterexamples}

In this short section we will show that there exist left-computable numbers which are not reordered computable. Our first counterexamples come from the set of left-computable numbers which are nearly computable. The nearly computable numbers are a subset of the real numbers introduced by Hertling and Janicki~\cite{HJ2023}.

\begin{defi}[Hertling, Janicki~\cite{HJ2023}]\label{def:fast-berechenbar}
	\;
	\begin{enumerate}[(1)]
		\item A sequence $(x_n)_n$ is called \emph{nearly computably convergent} if it converges and, for every computable increasing function $s \colon  \IN \to \IN$, the sequence $(x_{s(n+1)} - x_{s(n)})_n$ converges computably to $0$.
		\item A real number is called \emph{nearly computable} if there exists a computable sequence of rational numbers which converges nearly computably to it.
	\end{enumerate}
\end{defi}

Every computable number is nearly computable, but the converse is not true. It follows from a theorem by Downey and LaForte~\cite{DL02} that there exists a left-computable number which is nearly computably but not computable. We will show that those numbers cannot be reordered computable. In order to prove this, we will use the following characterization for left-computable numbers which are nearly computable.

\begin{prop}[Hertling, Janicki~\cite{HJ2023}]\label{prop:characterisierung-lnc}
	For a left-computable number~$x$ the following are equivalent:
	\begin{enumerate}[(1)]
		\item $x$ is nearly computable.
		\item For every computable increasing sequence of rational numbers $(x_n)_n$ converging to $x$, the sequence $(x_{n+1} - x_n)_n$ converges computably to zero.
	\end{enumerate}
\end{prop}

\begin{satz}\label{satz:nc+roc=>computable}
	Let $x$ be a reordered computable number which is nearly computable. Then $x$ is even computable.
\end{satz}

\begin{proof}
	Since $x$ is reordered computable, there exists a computable name $f \colon \IN \to \IN$ for $x$ such that the series $\sum_{k=0}^{\infty} u_f(k) \cdot 2^{-k}$ converges computably. Fix some computable modulus of convergence $s \colon \IN \to \IN$ for this series. Since $x$ is nearly computable as well, the sequence $(2^{-f(n)})_n$ converges computably to zero, according to Proposition \ref{prop:characterisierung-lnc}. Fix some computable modulus of convergence $r \colon \IN \to \IN$ for this sequence. We claim that the computable function $t \colon \IN\to \IN$ defined by $t(n) := r(s(n))$ is a modulus of convergence for the series $\sum_{k=0}^{\infty} 2^{-f(k)}$. Considering some arbitrary $n \in \IN$, we obtain
	\begin{equation*}
		\sum_{k=t(n)}^{\infty} 2^{-f(k)} = \sum_{k=r(s(n))}^{\infty} 2^{-f(k)} \leq \sum_{k=s(n)}^{\infty} u_f(k) \cdot 2^{-k} \leq 2^{-n}.
	\end{equation*}
	Therefore, $x$ is a computable number.
\end{proof}

There are other examples for left-computable numbers which are not reordered computable, namely the Martin-Löf random numbers. Recall that the left-computable numbers which are Martin-Löf random form the largest Solavay degree. Combining this fact with Theorem~\ref{satz:closure-roc-solovay} and Theorem \ref{satz:nc+roc=>computable}, we can conclude that these numbers are not reordered computable either.

\begin{kor}\label{kor:reordered-computable-implies-non-random}
	Every reordered computable number is not Martin-Löf random.
\end{kor}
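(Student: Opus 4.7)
The plan is to observe that this corollary is essentially an immediate consequence of the splitting theorem (Theorem~\ref{satz:splitting-theorem}) combined with two facts that the paper has already cited: first, that regainingly approximable numbers fail to be Martin-L\"of random (Hertling, H\"olzl, Janicki~\cite{HHJ2023}); and second, that the sum of two left-computable numbers which are not Martin-L\"of random is again not Martin-L\"of random (Downey, Hirschfeldt, Nies~\cite{DHN2002}).

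Concretely, I would start from an arbitrary reordered computable number $\gamma$. Applying Theorem~\ref{satz:splitting-theorem} produces regainingly approximable numbers $\alpha$ and $\beta$ with $\gamma = \alpha + \beta$. Both $\alpha$ and $\beta$ are in particular left-computable, and by the result from~\cite{HHJ2023} neither of them is Martin-L\"of random. Invoking the closure result from~\cite{DHN2002} applied to the pair $(\alpha,\beta)$ then yields that $\alpha + \beta = \gamma$ is not Martin-L\"of random either.

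No step here poses a real obstacle, since every ingredient has already been stated and used in the surrounding text; the only thing to be slightly careful about is to explicitly name the two cited results and to mention that the summands $\alpha,\beta$ from the splitting theorem are automatically left-computable (this is implicit in the definition of regainingly approximable, which requires a computable increasing rational sequence converging to the number). The corollary therefore follows in essentially one paragraph.
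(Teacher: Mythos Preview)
Your proposal is correct and matches the paper's own argument exactly: the paper also derives the corollary from Theorem~\ref{satz:splitting-theorem} together with the fact from~\cite{HHJ2023} that regainingly approximable numbers are not Martin-L\"of random and the result of Downey, Hirschfeldt and Nies~\cite{DHN2002} on sums of non-random left-computable numbers. There is nothing to add.
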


If follows from a theorem by Stephan and Wu \cite{SW05} that left-computable numbers which are nearly computable are not Martin-Löf random. So there exist left-computable numbers which are neither reordered computable nor nearly computable.

\section{Characteristic Dyadic Values}\label{sec:characteristic-dyadic-values}

In this final section we will investigate how often a computable name of a left-computable number enumerates its elements. Since we are investigating computable dyadic series, a reasonable measure for this might be the value of $\limsup_{n\to\infty} \sqrt[n]{u_f(n)}$ for some computable name $f \colon \IN \to \IN$. The intuition is that the higher this value is, the less efficient is this approximation.

\begin{prop}\label{prop:lim-sup-schranken}
	Let $f \colon \IN \to \IN$ be a function such that the series $\sum_{k=0}^{\infty} 2^{-f(k)}$ converges. Then we have $\limsup_{n\to\infty} \sqrt[n]{u_f(n)} \in \left[1, 2\right]$.
\end{prop}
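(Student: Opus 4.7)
The plan is to establish the two inequalities $1 \leq \limsup_{n\to\infty} \sqrt[n]{u_f(n)}$ and $\limsup_{n\to\infty} \sqrt[n]{u_f(n)} \leq 2$ separately. Both bounds should be straightforward consequences of well-known facts, so I do not expect any serious obstacle; the only ingredients are (a) that the terms of a convergent series are bounded and (b) that a function tending to infinity must have infinite range.

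For the upper bound, I would first recall that since $f$ tends to infinity, grouping the non-negative terms of $\sum_{k=0}^{\infty} 2^{-f(k)}$ according to the value $f(k)$ gives the identity $\sum_{k=0}^{\infty} 2^{-f(k)} = \sum_{n=0}^{\infty} u_f(n) \cdot 2^{-n}$ (this is essentially Proposition~\ref{prop:eigenschaften-u_f}, where $f^\ast$ provides a non-decreasing rearrangement and rearranging a series of non-negative reals preserves the sum). Hence the series $\sum_{n=0}^{\infty} u_f(n) \cdot 2^{-n}$ converges, which in particular means its terms are bounded: there exists $M \geq 1$ with $u_f(n) \cdot 2^{-n} \leq M$, i.e.\ $u_f(n) \leq M \cdot 2^n$, for all $n \in \IN$. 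Taking $n$-th roots yields $\sqrt[n]{u_f(n)} \leq 2 \cdot \sqrt[n]{M}$, and since $\lim_{n\to\infty} \sqrt[n]{M} = 1$, we conclude $\limsup_{n\to\infty} \sqrt[n]{u_f(n)} \leq 2$.

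For the lower bound, I would argue that since $\sum_{k=0}^{\infty} 2^{-f(k)}$ converges, the function $f$ tends to infinity and in particular has unbounded range. Therefore the set $\{n \in \IN \mid u_f(n) \geq 1\}$, which is exactly the range of $f$, is infinite. For every such $n \geq 1$ we have $\sqrt[n]{u_f(n)} \geq 1$, and consequently $\limsup_{n\to\infty} \sqrt[n]{u_f(n)} \geq 1$. Combining the two bounds yields the claim.
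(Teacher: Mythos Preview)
Your proof is correct and follows essentially the same approach as the paper: both argue that $u_f(n)\geq 1$ infinitely often for the lower bound, and that convergence of $\sum_{n} u_f(n)\cdot 2^{-n}$ forces $\limsup_n \sqrt[n]{u_f(n)}\leq 2$ for the upper bound. The only cosmetic difference is that the paper phrases the upper bound as the contrapositive of the root test, whereas you spell it out via boundedness of the terms; the content is the same.
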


\begin{proof}
	Since $\sum_{k=0}^{\infty} 2^{-f(k)}$ converges, $f$ tends to infinity. On the one hand, we have $u_f(n) \geq 1$ for infinitely many $n \in \IN$, hence $\limsup_{n\to\infty} \sqrt[n]{u_f(n)} \geq 1$. On the other hand, obviously, the series $\sum_{k=0}^{\infty} u_f(k) \cdot 2^{-k}$ converges as well, hence $\limsup_{n\to\infty} \sqrt[n]{u_f(n)} \leq 2$.
\end{proof}

It is not difficult to show that every left-computable number has a computable name $f \colon \IN \to \IN$ with $\limsup_{n\to\infty} \sqrt[n]{u_f(n)} = 2$. Next we will show that every left-computable number which has a computable name $f \colon \IN \to \IN$ with $\limsup_{n\to\infty} \sqrt[n]{u_f(n)} < 2$ is even reordered computable. 

\begin{prop}\label{prop:limsup<2->roc}
	Let $f \colon \IN \to \IN$ be a computable function which tends to infinity with $\limsup_{n\to\infty} \sqrt[n]{u_f(n)} < 2$. Then the series $\sum_{k=0}^{\infty} 2^{-f(k)}$ converges and its limit is a reordered computable number.
\end{prop}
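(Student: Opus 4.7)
The plan is to reduce the entire statement to a single application of Lemma~\ref{lem:berechenbare-konvergenz-dyadische-reihe}. The key observation is that the hypothesis $\limsup_{n \to \infty} \sqrt[n]{u_f(n)} < 2$ is exactly the hypothesis of that lemma applied to the function $u_f \colon \IN \to \IN$, and crucially Lemma~\ref{lem:berechenbare-konvergenz-dyadische-reihe} carries \emph{no} computability assumption on the function in question. This matters here because $u_f$ is in general non-computable even though $f$ is (as remarked after Proposition~\ref{prop:eigenschaften-u_f}), so without this absence of a computability hypothesis we would be stuck.

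First I would invoke Lemma~\ref{lem:berechenbare-konvergenz-dyadische-reihe} with $u_f$ in place of $f$ to conclude that the series $\sum_{k=0}^{\infty} u_f(k) \cdot 2^{-k}$ converges computably. Next, since all terms $2^{-f(k)}$ are positive and $f$ tends to infinity, grouping summands by the value of $f$ yields the identity
\begin{equation*}
    \sum_{k=0}^{\infty} 2^{-f(k)} = \sum_{n=0}^{\infty} u_f(n) \cdot 2^{-n},
\end{equation*}
so the left-hand series converges as well; call its limit $x$.

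It remains only to observe that $f$, being computable by hypothesis, is a computable name for $x$ in the sense of Definition~\ref{def:name-of-a-real-number}, and that the already-established computable convergence of $\sum_{k=0}^{\infty} u_f(k) \cdot 2^{-k}$ is exactly the additional property required by Definition~\ref{def:reordered-computable-number}. Hence $x$ is reordered computable. There is no genuine obstacle in the argument; the proposition might as well be stated as a corollary of Lemma~\ref{lem:berechenbare-konvergenz-dyadische-reihe}, and the only thing worth double-checking is that the cited lemma really applies to an arbitrary (possibly non-computable) function, which it does.
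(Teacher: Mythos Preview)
Your proposal is correct and follows essentially the same route as the paper: apply Lemma~\ref{lem:berechenbare-konvergenz-dyadische-reihe} to $u_f$ to obtain computable convergence of $\sum_{k=0}^{\infty} u_f(k)\cdot 2^{-k}$, deduce convergence of $\sum_{k=0}^{\infty} 2^{-f(k)}$, and conclude reordered computability directly from Definition~\ref{def:reordered-computable-number}. Your explicit emphasis that Lemma~\ref{lem:berechenbare-konvergenz-dyadische-reihe} carries no computability hypothesis on its input function is a helpful clarification that the paper leaves implicit.
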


\begin{proof}
	The series $\sum_{k=0}^{\infty} u_f(k) \cdot 2^{-k}$ converges computably, due to Lemma \ref{lem:berechenbare-konvergenz-dyadische-reihe}. Therefore, the series $\sum_{k=0}^{\infty} 2^{-f(k)}$ converges. Since $f$ is computable, its limit is a left-computable number which is even reordered computable.
\end{proof}

\begin{defi}
	For every left-computable number $x$ we define
	\begin{equation*}
		\varsigma_x := \inf\left\{ \limsup_{n\to\infty} \sqrt[n]{u_f(n)} \bigm| f \textnormal{ is a computable name for } x\right\}
	\end{equation*}
	as the \emph{characteristic dyadic value} of $x$.
\end{defi}

The following properties of the characteristic dyadic value follow directly from our previous results.

\begin{prop}
	Let $x$ be a left-computable number. Then we have:
	\begin{enumerate}[(a)]
		\item $\varsigma_x \in \left[1, 2\right]$.
		\item\label{cdv<2=>roc} If $\varsigma_x \in \left[1, 2\right[$, then $x$ is reordered computable.
		\item\label{cdv-regular-1} If $x$ is regular, then $\varsigma_x = 1$.
		\item If $x$ is nearly computable, then $\varsigma_x = 1$ or $\varsigma_x = 2$.
		\item If $x$ is Martin-Löf random, then $\varsigma_x = 2$.
	\end{enumerate}
\end{prop}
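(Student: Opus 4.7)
The plan is that each clause reduces to something already proved; no new construction is needed. For (a), apply Proposition \ref{prop:lim-sup-schranken} to every computable name $f$ of $x$: each value $\limsup_{n\to\infty}\sqrt[n]{u_f(n)}$ lies in $[1,2]$, so taking the infimum over all such $f$ preserves this containment. For (b), the definition of infimum gives that if $\varsigma_x<2$ then some computable name $f$ satisfies $\limsup_{n\to\infty}\sqrt[n]{u_f(n)}<2$, and Proposition \ref{prop:limsup<2->roc} then delivers that $x$ is reordered computable.

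For (c), Proposition \ref{prop:charakterisierung-regular} hands us a computable name $f$ for $x$ with $u_f$ bounded, so $\limsup_{n\to\infty}\sqrt[n]{u_f(n)}=1$, and combined with (a) this pins $\varsigma_x=1$. For (d), I take the contrapositive of (b): if $\varsigma_x\neq 2$ then by (a) $\varsigma_x<2$, so $x$ would be reordered computable, contradicting Corollary \ref{kor:reordered-computable-implies-non-random}.

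For (e), I combine (b), (c) and Theorem \ref{satz:roc-nc-implies-ec}. Assume $x$ is nearly computable. If $x$ is computable, then I first observe that $x$ is regular, so (c) gives $\varsigma_x=1$. If $x$ is not computable, then Theorem \ref{satz:roc-nc-implies-ec} says that $x$ is not reordered computable, hence the contrapositive of (b) together with (a) forces $\varsigma_x=2$. The equivalence ``$\varsigma_x=1$ iff $x$ is computable'' then follows: the forward direction comes from reversing the case split (if $\varsigma_x=1$ then $\varsigma_x<2$, so (b) makes $x$ reordered computable, and Theorem \ref{satz:roc-nc-implies-ec} promotes near-computability to computability), and the backward direction was just established.

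The step that deserves the most care is the subclaim in (e) that every positive computable number is regular. I would argue this by writing $x=\lfloor x\rfloor+\{x\}$: the fractional part $\{x\}\in[0,1)$ has a computable binary expansion (uniquely so when $\{x\}$ is not a dyadic rational, and a terminating one otherwise), so the set $A\In\IN$ of positions with digit $1$ is computable and $x_A=\{x\}$, making $\{x\}$ strongly left-computable. The integer part $\lfloor x\rfloor$ is a finite sum of copies of $1=x_{\IN}$, each strongly left-computable. Thus $x$ is a sum of strongly left-computable numbers, i.e.\ regular, and (c) applies. Everything else in the statement is a bookkeeping exercise over the preceding results.
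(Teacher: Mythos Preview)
Your proof is correct and follows essentially the same route as the paper: each clause is reduced to the same earlier result (Proposition~\ref{prop:lim-sup-schranken}, Proposition~\ref{prop:limsup<2->roc}, Proposition~\ref{prop:charakterisierung-regular}, Corollary~\ref{kor:reordered-computable-implies-non-random}, Theorem~\ref{satz:roc-nc-implies-ec}), and part (e) is handled by the same cycle of implications. The only difference is that you supply an explicit argument for ``computable $\Rightarrow$ regular'' via the binary expansion and $1=x_{\IN}$, whereas the paper simply asserts this fact.
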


\begin{proof} \;
	\begin{enumerate}[(a)]
		\item This follows from Proposition \ref{prop:lim-sup-schranken}.
		\item This follows from Proposition \ref{prop:limsup<2->roc}.
		\item This follows from Proposition \ref{prop:charakterisierung-regular}.
		\item If $x$ is not reordered computable, then $\varsigma_x = 2$, due to Statement~\ref{cdv<2=>roc}. Otherwise $x$ is reordered computable, and thus computable, according to Theorem \ref{satz:nc+roc=>computable}. Since computable numbers are regular, we have $\varsigma_x = 1$, due to Statement~\ref{cdv-regular-1}.
		\item This follows from Corollary \ref{kor:reordered-computable-implies-non-random}.
%
%
%
	\end{enumerate}
\end{proof}

We will later observe that there also exist a reordered computable number $x$ with $\varsigma_x = 2$ and a reordered computable number $y$ with $\varsigma_y = 1$ which is not regular. But first we will prove some other properties of the characteristic dyadic value.

\begin{satz}\label{satz:properties-of-varsigma}
	For left-computable numbers $x$ and $y$ the following hold:
	\begin{enumerate}[(a)]
		\item If $x \leq_S y$, then $\varsigma_x \leq \varsigma_y$.
		\item $\varsigma_{x+y} = \max\{\varsigma_x, \varsigma_y\}$
		\item $\varsigma_{x \cdot y} = \max\{\varsigma_x, \varsigma_y\}$
	\end{enumerate}
\end{satz}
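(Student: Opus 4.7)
The plan is to prove (a) first, using the construction embedded in the proof of Theorem~\ref{satz:closure-roc-solovay}. The lower bounds in (b) and (c) then follow immediately from (a) via the least-upper-bound property of $[x+y]_S = [x \cdot y]_S$ in the Solovay degrees, and the upper bounds are obtained from the interleaving and Cauchy product names used in the proofs of Theorems~\ref{prop:closure-sum} and \ref{prop:closure-product}.

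For (a): Fix any computable name $g \colon \IN \to \IN$ for $y$ and form the computable increasing rational sequence $y_n := \sum_{k=0}^{n-1} 2^{-g(k)}$ converging to $y$. Applying Lemma~\ref{prop:characterisierung-solovay-reduktion} yields a computable increasing rational sequence $(x_n)_n$ converging to $x$ and a constant $c \in \IN$ with $x_{n+1} - x_n \leq 2^{-g(n)+c}$. Plugging $(x_n)_n$ into the greedy stage-by-stage construction in the proof of Theorem~\ref{satz:closure-roc-solovay} produces a computable name $f \colon \IN \to \IN$ for $x$ with $u_f(n) \leq n + \sum_{k=0}^{n+c} u_g(k)$ for all $n \geq 1$. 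Combining Lemma~\ref{lem:limsup-g-summe-f}(1) with $\sqrt[n]{n} \to 1$ and Proposition~\ref{prop:lim-sup-schranken} (so that the additive $n$ term is dominated by $\limsup \sqrt[n]{u_g(n)} \geq 1$), I conclude $\limsup_{n\to\infty} \sqrt[n]{u_f(n)} \leq \limsup_{n\to\infty} \sqrt[n]{u_g(n)}$. Taking the infimum over all computable names $g$ of $y$ yields $\varsigma_x \leq \varsigma_y$.

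The lower bounds $\max\{\varsigma_x, \varsigma_y\} \leq \varsigma_{x+y}$ and $\max\{\varsigma_x, \varsigma_y\} \leq \varsigma_{x \cdot y}$ now follow from (a) applied to $x \leq_S x+y$, $y \leq_S x+y$, $x \leq_S x \cdot y$, and $y \leq_S x \cdot y$. For the upper bound in (b), let $f$ and $g$ be arbitrary computable names of $x$ and $y$, and let $h$ be the interleaving from the proof of Theorem~\ref{prop:closure-sum}, which is a computable name of $x+y$ with $u_h = u_f + u_g$. The elementary inequality $\limsup \sqrt[n]{u_h(n)} \leq \max\{\limsup \sqrt[n]{u_f(n)}, \limsup \sqrt[n]{u_g(n)}\}$, together with a standard $\vep$-argument on the infima defining $\varsigma_x$ and $\varsigma_y$, yields $\varsigma_{x+y} \leq \max\{\varsigma_x, \varsigma_y\}$.

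For the upper bound in (c), I use the Cauchy product construction from the proof of Theorem~\ref{prop:closure-product} on arbitrary computable names $f$ and $g$ of $x$ and $y$ to produce a computable name $h$ of $x \cdot y$ that enumerates each pair-sum $f(i) + g(j)$ exactly once, giving $u_h(n) = \sum_{k=0}^{n} u_f(k) \cdot u_g(n-k)$. The main obstacle is the convolution estimate
\[
\limsup_{n\to\infty} \sqrt[n]{u_h(n)} \leq \max\bigl\{\limsup_{n\to\infty} \sqrt[n]{u_f(n)},\ \limsup_{n\to\infty} \sqrt[n]{u_g(n)}\bigr\}.
\]
I would prove this by fixing any rational $\rho$ strictly larger than the right-hand maximum: there is then a constant $C \in \IN$ with $u_f(k), u_g(k) \leq C + \rho^k$ for all $k \in \IN$, and since $\rho \geq 1$, expanding the convolution yields $u_h(n) \leq D \cdot (n+1) \cdot \rho^n$ for some constant $D \in \IN$. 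Taking $n$-th roots and letting $\rho$ approach the right-hand maximum gives the bound, and a final $\vep$-argument on the names $f$ and $g$ completes the proof of $\varsigma_{x \cdot y} \leq \max\{\varsigma_x, \varsigma_y\}$.
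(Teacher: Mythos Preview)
Your proposal is correct and follows essentially the same route as the paper: part~(a) via the greedy name from the proof of Theorem~\ref{satz:closure-roc-solovay} together with Lemma~\ref{lem:limsup-g-summe-f} and Proposition~\ref{prop:lim-sup-schranken}; the lower bounds in (b) and (c) via the Solovay supremum $[x+y]_S = [x\cdot y]_S$; and the upper bounds via the interleaving and Cauchy-product names from Theorems~\ref{prop:closure-sum} and~\ref{prop:closure-product}. The only cosmetic difference is that in~(c) the paper bounds $u_{f_k}(n) \leq c_k\,\rho_k^{\,n}$ directly from the strict inequality $\limsup < \rho_k$, whereas you use $u_f(k) \leq C + \rho^{k}$; both yield the same $(n+1)\rho^{n}$ estimate for the convolution.
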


\begin{proof}
	Denote $\rho := \max\{\varsigma_x, \varsigma_y\}$ for short and define the sequence $(\rho_k)_k$ with $\rho_k := \rho + 2^{-k}$ for every $k \in \IN$, which obviously converges to $\rho$.
	\begin{enumerate}[(a)]
		\item\label{statement-a} Let $(g_k)_k$ be a sequence of computable names for $y$ satisfying
		\begin{equation*}
			\limsup_{n\to\infty} \sqrt[n]{u_{g_k}(n)} \leq \varsigma_y + 2^{-k}
		\end{equation*}
		for all $k \in \IN$. Revisiting the proof of Theorem \ref{satz:closure-roc-solovay}, we observe that there exists a computable name $f_k \colon \IN\to\IN$ for $x$  and a constant $c \in \IN$ with $u_{f_k}(n) \leq n + \sum_{j=0}^{n+c} u_{g_k}(j)$ for all $n \geq 1$. We can conclude
		\begin{align*}
			\varsigma_x &\leq \limsup_{n\to\infty} \sqrt[n]{u_{f_k}(n)} \\
			&\leq \limsup_{n\to\infty} \sqrt[n]{n + \sum_{j=0}^{n+c} u_{g_k}(j)} \\
			&= \max\left\lbrace \limsup_{n\to\infty} \sqrt[n]{n}, \limsup_{n\to\infty} \sqrt[n]{\sum_{j=0}^{n+c} u_{g_k}(j)} \right\rbrace \\
			&= \max\left\lbrace 1, \limsup_{n\to\infty} \sqrt[n]{u_{g_k}(n)} \right\rbrace &\text{Lemma \ref{lem:limsup-g-summe-f}} \\
			&= \limsup_{n\to\infty} \sqrt[n]{u_{g_k}(n)} &\text{Proposition \ref{prop:lim-sup-schranken}} \\
			&\leq \varsigma_y + 2^{-k}
		\end{align*}
		for all $k \in \IN$, hence $\varsigma_x \leq \varsigma_y$.
		\item It is well-known that $\left[x + y\right]_S$ is the supremum of $\left[x\right]_S$ and $\left[y\right]_S$. So we have $\varsigma_{x + y} \geq \rho$, according to Statement \ref{statement-a}. We still have to show the other direction.
		Let $(f_k)_k$ and $(g_k)_k$ be sequences of computable names for $x$ and $y$, respectively, satisfying
		\begin{align*}
			\limsup_{n\to\infty} \sqrt[n]{u_{f_k}(n)} \leq \rho_k \\
			\limsup_{n\to\infty} \sqrt[n]{u_{g_k}(n)} \leq \rho_k
		\end{align*}
		for all $k \in \IN$. Revisiting the proof of Theorem \ref{prop:closure-sum}, we observe that there exists a computable name $h_k \colon \IN \to \IN$ for $x+y$ with $u_{h_k}(n) = u_{f_k}(n) + u_{g_k}(n)$ for all $n \in \IN$. We can conclude
		\begin{align*}
			\varsigma_{x + y} &\leq 
			\limsup_{n\to\infty} \sqrt[n]{u_{h_k}(n)} \\
			&= \limsup_{n\to\infty} \sqrt[n]{u_{f_k}(n) + u_{g_k}(n)} \\
			&= \max\left\lbrace \limsup_{n\to\infty} \sqrt[n]{u_{f_k}(n)}, \limsup_{n\to\infty} \sqrt[n]{u_{g_k}(n)}\right\rbrace  \\
			&\leq \rho_k
		\end{align*}
		for all $k \in \IN$, hence $\varsigma_{x+y} \leq \rho$.
		\item It is well-known that $\left[x \cdot y\right]_S$ is the supremum of $\left[x\right]_S$ and $\left[y\right]_S$. So we have $\varsigma_{x \cdot y} \geq \rho$, according to Statement \ref{statement-a}. We still have to show the other direction. Let $(f_k)_k$ and $(g_k)_k$ be sequences of computable names for $x$ and $y$, respectively, satisfying
		\begin{align*}
			\limsup_{n\to\infty} \sqrt[n]{u_{f_k}(n)} < \rho_k \\
			\limsup_{n\to\infty} \sqrt[n]{u_{g_k}(n)} < \rho_k
		\end{align*}
		for all $k \in \IN$. Then, there exists a constant $c_k > 0$ satisfying
		\begin{align*}
			u_{f_k}(n) \leq c_k \cdot \rho_k^n \\
			u_{g_k}(n) \leq c_k \cdot \rho_k^n 
		\end{align*}
		for all $n \in \IN$.
		Revisiting the proof of Theorem \ref{prop:closure-product}, we observe that there exists a computable name $h_k \colon \IN \to \IN$ for $x \cdot y$ with $u_{h_k}(n) = \sum_{j=0}^{n} u_{f_k}(j) \cdot u_{g_k}(n-j)$ for all $n \in \IN$. This implies
		\begin{align*}
			\varsigma_{x \cdot y} &\leq
			\limsup_{n\to\infty} \sqrt[n]{u_{h_k}(n)} \\ 
			&= \limsup_{n\to\infty} \sqrt[n]{\sum_{j=0}^{n} u_{f_k}(j) \cdot u_{g_k}(n-j)} \\
			&\leq \limsup_{n\to\infty} \sqrt[n]{\sum_{j=0}^{n} c_k \cdot \rho_k^j \cdot c_k \cdot \rho_k^{n-j}} \\
			&= \limsup_{n\to\infty} \sqrt[n]{c_k^2 \cdot \sum_{j=0}^{n} \rho_k^n} \\
			&= \limsup_{n\to\infty} \sqrt[n]{c_k^2 \cdot (n+1) \cdot \rho_k^n} \\
			&= \rho_k
		\end{align*}
		for all $k \in \IN$, hence $\varsigma_{x \cdot y} \leq \rho$.
	\end{enumerate}
\end{proof}

After this introduction we are ready for our last main result of this article.

\begin{satz}\label{satz:hierarchiesatz-roc}
	For every computably approximable number $\rho \in \left[ 1, 2\right]$ there exists a reordered computable number $x$ satisfying the following properties:
	\begin{enumerate}[(1)]
		\item\label{property-1} $\varsigma_x = \rho$
		\item\label{property-2} $x$ has computable name $f \colon \IN \to \IN$ with $\limsup_{n\to\infty} \sqrt[n]{u_f(n)} = \varsigma_x$.
		\item\label{property-3} $x$ is not regular.
	\end{enumerate}
\end{satz}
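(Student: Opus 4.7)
The plan is an infinite diagonalization that constructs $x$ in stages together with a distinguished computable name $f \colon \IN \to \IN$ for it. The name $f$ is designed so that $u_f$ tracks a reference function $r$ given by Lemma \ref{prop:berechenbare-konvergenz-berechenbar-approximierbar}, thereby witnessing property~(\ref{property-2}) directly, while the diagonalization ensures that no other computable name for $x$ can achieve a smaller $\limsup \sqrt[n]{u_g(n)}$, establishing property~(\ref{property-1}), and also forces non-regularity (property~(\ref{property-3})).

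First I would apply Lemma \ref{prop:berechenbare-konvergenz-berechenbar-approximierbar} to fix a computable $r \colon \IN \to \IN$ that tends to infinity, with $\sum_k r(k) \cdot 2^{-k}$ computably convergent and $\sqrt[n]{r(n)} \to \rho$. The baseline strategy is to enumerate $f$ so that $u_f = r$; this alone produces a reordered computable number $x$ with $\limsup \sqrt[n]{u_f(n)} = \rho$ and hence $\varsigma_x \leq \rho$. Around this baseline, the construction must meet the following requirements for every $e \in \IN$:
\begin{itemize}
	\item $R_{e,q}$ for each rational $q \in (1, \rho)$: if $\varphi_e$ is total and $\sum_k 2^{-\varphi_e(k)} = x$, then $u_{\varphi_e}(n) \geq q^n$ for infinitely many $n$.
	\item $N_e$: if $\varphi_e$ is total and $\sum_k 2^{-\varphi_e(k)} = x$, then $u_{\varphi_e}$ is unbounded.
\end{itemize}
Satisfying all $R_{e,q}$ yields $\varsigma_x \geq \rho$, and satisfying all $N_e$ gives non-regularity via Proposition \ref{prop:charakterisierung-regular}. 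The $R_{e,q}$-family is vacuous when $\rho = 1$ and subsumes $N_e$ when $\rho > 1$, so the $N$-requirements only have bite in the border case $\rho = 1$.

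The construction proceeds by stages under a priority ordering of the $(e,q)$-pairs and indices $e$. At stage $s$, $f$ has been specified up to some position, and I would first extend it with the default enumerations demanded by the template $u_f \approx r$. Then, for each active requirement of higher priority than $s$, I would inspect the $s$-step approximation $\varphi_{e,s}$: if its partial sum is close to the current $x_s$ while $u_{\varphi_{e,s}}(n) < q^n$ (respectively while $u_{\varphi_{e,s}}(n)$ remains bounded, for an $N$-requirement) throughout an increasing window, we trigger a response, picking a fresh level $n^\ast$ assigned to this requirement and enumerating additional copies of $n^\ast$ in $f$. This forces $x$ to jump by an amount that $\varphi_e$, if it is a name for $x$, must eventually match, and the number of added copies is calibrated so that matching forces $u_{\varphi_e}(m) \geq q^m$ for some $m \geq n^\ast$ (respectively forces $u_{\varphi_e}(m)$ to exceed any prescribed bound).

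The main obstacle will be to keep the diagonalization injections small enough that $\limsup \sqrt[n]{u_f(n)} = \rho$ is preserved. The available slack is twofold: $\sqrt[n]{r(n)} \to \rho$ leaves room to add subexponential noise, and each requirement may be postponed to a sufficiently late level $n^\ast$ so that $q^{n^\ast}$ is negligible compared with $\rho^{n^\ast}$. Assigning distinct requirements to disjoint fresh levels and bounding the response per level by $q^{n^\ast}$ (or by a mildly divergent quantity for $N_e$), the perturbed $u_f$ differs from $r$ by only subexponentially many extra copies at each level, and Lemma \ref{lem:limsup-g-summe-f} can be invoked to control the resulting $\limsup \sqrt[n]{u_f(n)}$. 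Verification then assembles: property~(\ref{property-2}) from the explicit $f$, property~(\ref{property-1}) from the $R$-diagonalization combined with the upper bound $\varsigma_x \leq \rho$ from the baseline, and property~(\ref{property-3}) from $\varsigma_x = \rho > 1$ when $\rho > 1$ and from the $N$-requirements when $\rho = 1$.
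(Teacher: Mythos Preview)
Your overall architecture---build $f$ from a reference $r$ supplied by Lemma~\ref{prop:berechenbare-konvergenz-berechenbar-approximierbar} and diagonalize against all computable names---matches the paper's, but the sketch glosses over the one mechanism without which the diagonalization is vacuous. When you ``enumerate additional copies of $n^\ast$ in $f$'' so that $x$ jumps by some amount $\Delta$, a competing name $\varphi_e$ can match $\Delta$ with a \emph{single} element at level roughly $-\log_2\Delta$, possibly far below $n^\ast$; this places no constraint on $u_{\varphi_e}$ at high levels, so the claim ``the number of added copies is calibrated so that matching forces $u_{\varphi_e}(m)\geq q^m$ for some $m\geq n^\ast$'' is not supported by anything in the plan.

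The paper's fix is an iterated small-jump-and-wait cycle with a two-sided bound. At the level $m_j=s(\langle i,j\rangle)$ reserved for $\mathcal R_i$, the construction adds \emph{one} copy of $m_j$, waits until $\varphi_i$'s partial sum is within $\tfrac18\cdot 2^{-m_j}$ of the current $x_t$, adds another single copy, and repeats for $\widehat r(m_j)$ rounds. The spacing function $s$ is chosen so that between two consecutive moments of attention the total increase of $x$ is at most $\tfrac74\cdot 2^{-m_j}$ (Lemma~\ref{lem:abschaetzung-spruenge-igel}); this \emph{upper} bound is what forbids $\varphi_i$ from using any element at level $<m_j$ during the cycle (such an element would overshoot the target window), so every element $\varphi_i$ enumerates in these rounds lies at level $\geq m_j$, giving the tail estimate $\sum_{k\geq m_j} u_{\varphi_i}(k)\,2^{-k}\geq \tfrac34\,r(m_j)\cdot 2^{-m_j}$. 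Even then one does not obtain $u_{\varphi_i}(m)\geq q^m$ for a specific $m$: the paper converts these tail estimates into $\limsup_n\sqrt[n]{u_{\varphi_i}(n)}\geq\rho$ by comparison with an auxiliary function $h$ satisfying $\sqrt[n]{h(n)}\to 2$ and $\sum_k h(k)2^{-k}<\tfrac34$. Both the overshoot bound and the tail-to-limsup conversion are absent from your plan, and the first is the essential idea. (Incidentally, the paper needs only one requirement $\mathcal R_e$ per index rather than one per pair $(e,q)$, and it does not maintain a baseline separate from the diagonalization: jumps occur \emph{only} when some requirement receives attention, with counters automatically capping $u_f$ at $\widehat r$.)
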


\begin{proof}
	Let $\rho \in \left[1, 2\right]$ be a computably approximable number. We will prove this theorem by constructing a computable non-decreasing sequence $(x_t)_t$ of rational numbers. At every stage $t\in\IN$, this sequence will either make a dyadic jump to the right or stay where it is. In order to ensure that $(x_t)_t$ converges to some reordered computable number, we will limit its jumps in advance.
	Fix some computable function $r \colon \IN \to \IN$ which tends to infinity and satisfies the two properties listed in Lemma~\ref{prop:berechenbare-konvergenz-berechenbar-approximierbar}.
	Then we can recursively compute an increasing function $s \colon \IN \to \IN$ satisfying
	\begin{itemize}
		\item $r(s(0)) \geq 1$,
		\item $r(s(n+1)) \geq 2 \cdot \sum_{k=0}^{n} r(s(k))$ and
		\item $\sum_{k=s(n+1)}^{\infty} r(k) \cdot 2^{-k} \leq \frac{1}{2} \cdot 2^{-s(n)}$
	\end{itemize}
	for all $n \in \IN$.
%
	During the construction of $(x_t)_t$ the algorithm will take care that this sequence makes at most $r(n)$ jumps of size $2^{-n}$. This will enforce that $(x_t)_t$ converges and that its limit, which is denoted by $x$, is a reordered computable number with $\varsigma_x \leq \rho$. On top of that, we define for every $e \in \IN$ the following infinite list of requirements:
	\begin{equation*}
		\mathcal{R}_e : \varphi_e \text{ total and a name for } x \implies \limsup_{n\to\infty} \sqrt[n]{u_{\varphi_e}(n)} \geq \rho
	\end{equation*}
	This enforces $\varsigma_x \geq \rho$ as well. In order to satisfy all these requirements, the algorithm performs an infinite diagonalization. Whenever some function $\varphi_{e}$ looks like a name of $x$, the requirement $\mathcal{R}_e$ will receive attention. We will react to this by performing some dyadic jump to the right with our sequence $(x_t)_t$. Since every left-computable number has infinitely many computable names, there will be infinitely many requirements receiving attention infinitely often. Therefore, $(x_t)_t$ also implicitly defines a computable name $f\colon\IN\to\IN$ for $x$.
	
	If such $\varphi_{e}$ actually is a name for $x$, the idea of this construction is that we want $\varphi_{e}$ to mimic the behavior of $f$. Note that we do not want to have for $\varphi_{e}$ that too many tiny jumps of $(x_t)_t$ are effectively replaced by some big jump, since this can lead to $\limsup_{n\to\infty} \sqrt[n]{u_{\varphi_{e}(n)}} < \rho$, which has to be prevented. Therefore, we have to choose the dyadic jumps of $(x_t)_t$ very carefully. During the entire construction, for all $n \in \IN$, only jumps of size $2^{-s(n)}$ will be used for $(x_t)_t$. This will ensure that the total sum of all minor jumps can never exceed the size of a single major jump. On top of that, for all $i, j \in \IN$, only jumps of size $2^{-s(\langle i, j \rangle)}$ will be used whenever $\mathcal{R}_i$ receives attention. Note that every big jump which is made during this construction can destroy the achievements of any prior smaller jumps. So there are conflicts between the requirements. However, this side-effect is already taken into account by the definition of $s$. We will later see that there are still enough successful jumps left, which are witnessed by $r$, to ensure $\varsigma_x \geq \rho$ and that $x$ is not regular.
	
	Furthermore, we recursively compute the two functions $w, c \colon \IN^2 \to \IN$. For all $e, t \in \IN$, the number $w(e)[t]$ basically encodes the current jump size which is used whenever $\mathcal{R}_e$ receives attention at stage $t$, and the number $c(e)[t]$ is a counter which tells us how many jumps of that size can still be made.
	
	After this motivation, we come to the formal description of this algorithm. At stage $0$ we define
	\begin{align*}
		x_0 &:= 0 \\
		w(e)[0] &:= 0 \\
		c(e)[0] &:= r(s(\left\langle e, 0\right\rangle)) - 1 
	\end{align*}
	for all $e \in \IN$. At stage $t+1$, compute the two numbers $i, j \in \IN$ with $\left\langle i, j\right\rangle = t$ and check whether the condition
	\begin{equation}\label{ineq:requires-attention}
		\left|x_t - \sum_{k\in W_i[t]}^{} 2^{-\varphi_i(k)}\right| \leq \frac{1}{8} \cdot 2^{-s(\left\langle i, w(i)[t]\right\rangle)}
	\end{equation}
	is met. If this is not the case, we set
	\begin{align*}
		x_{t+1} &:= x_t \\
		w(e)[t+1] &:= w(e)[t] \\
		c(e)[t+1] &:= c(e)[t]
	\end{align*}
	for all $e \in \IN$.
	Otherwise, we say that $\mathcal{R}_i$ \emph{receives attention} at this stage and we set
	\begin{align*}
		x_{t+1} &:= x_t + 2^{-s(\left\langle i, w(i)[t]\right\rangle)} \\
		w(e)[t+1] &:= \begin{cases}
			w(e)[t] + 1 &\text{if $e = i$ and $c(e)[t] = 0$} \\
			w(e)[t] &\text{otherwise}
		\end{cases} \\
		c(e)[t+1] &:= \begin{cases}
			r(s(\left\langle e, w(e)[t+1]\right\rangle )) - 1 &\text{if $e = i$ and $c(e)[t] = 0$} \\
			c(e)[t] - 1 &\text{if $e = i$ and $c(e)[t] > 0$} \\
			c(e)[t] &\text{otherwise}
		\end{cases}
	\end{align*}
	for all $e \in \IN$. This finishes the formal description of the algorithm.
	
	\begin{prop}
		$(x_t)_t$ is computable, non-decreasing and convergent. Hence, $x := \lim_{t \to \infty} x_t$ is a left-computable number.
	\end{prop}
	
	\begin{proof}
		Clearly, $(x_t)_t$ is computable and non-decreasing. We show that the series $\sum_{t=0}^{\infty} \left(x_{t+1} -x_t\right)$ converges. By construction, we have for all $i,j \in \IN$ at most $r(s(\left\langle i,j\right\rangle ))$ jumps of size $2^{-s(\left\langle i,j\right\rangle )}$. So we get
		\begin{align*}
			\sum_{t=0}^{\infty} \left(x_{t+1} - x_{t}\right)
			&\leq \sum_{i=0}^{\infty} \sum_{j=0}^{\infty} r(s(\left\langle i,j\right\rangle )) \cdot 2^{-s(\left\langle i,j\right\rangle )} \\
			&= \sum_{k=0}^{\infty} r(s(k)) \cdot 2^{-s(k)} \\
			&\leq \sum_{k=0}^{\infty} r(k) \cdot 2^{-k}.
		\end{align*}
	\end{proof}

	Obviously, there exist infinitely many $t \in \IN$ with $x_{t+1} > x_t$. Let $(t_n)_n$ be the uniquely determined computable increasing sequence of natural numbers which enumerates these stages, and let $f \colon \IN\to\IN$ be the uniquely determined computable function with $2^{-f(n)} = x_{t_n + 1} - x_{t_n}$ for all $n \in \IN$. So we get
	\begin{equation*}
		\sum_{k=0}^{\infty} 2^{-f(k)} 
		= \sum_{k=0}^{\infty} \left(x_{t_k + 1} - x_{t_k}\right) = \sum_{t=0}^{\infty} \left(x_{t+1} - x_{t}\right) = x.
	\end{equation*}
	Therefore, $f$ is a computable name for $x$.

	\begin{prop}
		The series $\sum_{k=0}^{\infty} u_f(k) \cdot 2^{-k}$ converges computably. Hence, $x$ is a reordered computable number.
	\end{prop}
	
	\begin{proof}
		By construction, we have $u_f(n) \leq r(n)$ for all $n \in \IN$. Therefore, the series $\sum_{k=0}^{\infty} u_f(k) \cdot 2^{-k}$ converges computably, and $x$ is a reordered computable number.
	\end{proof}
	
	In order to prove the next results, the next two lemmas are essential.
	\begin{lem}\label{lem:abschaetzung-spruenge-igel}
		Let $i \in \IN$, let $t_1 \in \IN$ be a stage at which $\mathcal{R}_i$ receives attention, and let $t_2 > t_1$ be the next stage at which $\mathcal{R}_i$ receives attention. 
		\begin{enumerate}[(a)]
			\item\label{lem:abschaetzung-spruenge-igel-1} Then we have the following lower bound:
			\begin{equation*}
				\sum_{k\in W_i[t_2]}^{} 2^{-\varphi_i(k)} - \sum_{k\in W_i[t_1]}^{} 2^{-\varphi_i(k)} \geq \frac{3}{4} \cdot 2^{-s(\left\langle i, w(i)[t_1] \right\rangle)}
			\end{equation*}
			\item\label{lem:abschaetzung-spruenge-igel-2} Furthermore, if for every $t \in \{t_1 + 1, \dots, t_2 - 1\}$ and for every $e \in \IN$ such that $\mathcal{R}_e$ receives attention at $t$ we have $\left\langle e, w(e)[t] \right\rangle > \left\langle i, w(i)[t_1] \right\rangle $, then we also have the following upper bound:
			\begin{equation*}
				\sum_{k\in W_i[t_2]}^{} 2^{-\varphi_i(k)} - \sum_{k\in W_i[t_1]}^{} 2^{-\varphi_i(k)} \leq \frac{7}{4} \cdot 2^{-s(\left\langle i, w(i)[t_1] \right\rangle)}
			\end{equation*}
		\end{enumerate}
	\end{lem}
	\begin{proof}
		Recall the condition that is satisfied (Inequality \ref{ineq:requires-attention}) when a requirement receives attention.
		\begin{enumerate}[(a)]
			\item Note that the following three inequalities hold:
			\begin{align*}
				x_{t_2} - x_{t_1} &\geq 2^{-s(\left\langle i, w(i[t_1])\right\rangle )} \\
				\sum_{k\in W_i[t_2]}^{} 2^{-\varphi_i(k)} - x_{t_2} &\geq - \frac{1}{8} \cdot 2^{-s(\left\langle i, w(i[t_1])\right\rangle )} \\
				x_{t_1} - \sum_{k\in W_i[t_1]}^{} 2^{-\varphi_i(k)} &\geq - \frac{1}{8} \cdot 2^{-s(\left\langle i, w(i[t_1])\right\rangle)} 
			\end{align*}
			The first inequality holds, since $\mathcal{R}_i$ receives attention at $t_1$ and $(x_t)_t$ is non-decreasing. The second inequality holds, since $\mathcal{R}_i$ receives attention at $t_2$ and we have $w(i)[t_2] \geq w(i)[t_1]$. The third inequality holds, $\mathcal{R}_i$ receives attention at $t_1$. Adding these three inequalities, we obtain the desired result.
			\item In this case the following three inequalities hold: \begin{align*}
				x_{t_2} - x_{t_1} &\leq \frac{3}{2} \cdot 2^{-s(\left\langle i, w(i[t_1])\right\rangle )} \\
				\sum_{k\in W_i[t_2]}^{} 2^{-\varphi_i(k)} - x_{t_2} &\leq  \frac{1}{8} \cdot 2^{-s(\left\langle i, w(i[t_1])\right\rangle )} \\
				x_{t_1} - \sum_{k\in W_i[t_1]}^{} 2^{-\varphi_i(k)} &\leq  \frac{1}{8} \cdot 2^{-s(\left\langle i, w(i[t_1])\right\rangle)} 
			\end{align*}
			By assumption, $\mathcal{R}_i$ receives attention at $t_1$ and the only numbers which are enumerated by $f$ between $t_1$ and $t_2$ are at least $s(\left\langle i, w(i)[t_1]\right\rangle+1 )$. Putting these two observations together, we obtain
			\begin{equation*}
				x_{t_2} - x_{t_1} \leq 2^{-s(\left\langle i, w(i[t_1])\right\rangle)} + \sum_{k=s(\left\langle i, w(i)[t_1]\right\rangle+1 )}^{\infty} r(k) \cdot 2^{-k} \leq \frac{3}{2} \cdot 2^{-s(\left\langle i, w(i[t_1])\right\rangle )}
			\end{equation*}
			by the definition of $s$. Therefore, the first inequality holds. The second inequality holds, since $\mathcal{R}_i$ receives attention at $t_2$ and we have $w(i)[t_2] \geq w(i)[t_1]$. The third inequality holds, since $\mathcal{R}_i$ receives attention at $t_1$. Adding these three inequalities, we obtain the desired result.			
		\end{enumerate}
	\end{proof}

	\begin{lem}\label{lem:master-lemma}
		Let $i \in \IN$ such that $\varphi_i$ is total and a name for $x$. Define the sequence $(m_j)_j$ by $m_j := s(\left\langle i,j\right\rangle)$ for all $j \in \IN$. Then the following statements hold:
		\begin{enumerate}[(a)]
			\item\label{lem:master-lemma-1} $\mathcal{R}_i$ receives attention at infinitely many stages.
			\item\label{lem:master-lemma-2} $u_f(m_j) = r(m_j)$ for all $j \in \IN$
			\item\label{lem:master-lemma-3} $\limsup_{n\to\infty} \sqrt[n]{u_f(n)} = \rho$
			\item\label{lem:master-lemma-4} $\sum_{k=0}^{\infty} u_{\varphi_i}(m_j+k) \cdot 2^{-(m_j+k)} \geq \frac{3}{8} \cdot  r(m_j) \cdot 2^{-m_j}$ for all $j \in \IN$
			\item\label{lem:master-lemma-5} $\limsup_{n\to\infty} \sqrt[n]{u_{\varphi_i}(n)} \geq \rho$
			\item\label{lem:master-lemma-6} $u_{\varphi_i}$ is not bounded.
		\end{enumerate}
	\end{lem}

	\begin{proof} \;
		\begin{enumerate}[(a)]
			\item This is clear.
			\item This follows from Statement \ref{lem:master-lemma-1} by construction.
			\item 
			Since we have $u_f(n) \leq r(n)$ for all $n \in \IN$, we obtain
			\begin{equation*}
				\limsup_{n\to\infty} \sqrt[n]{u_f(n)} \leq \limsup_{n\to\infty} \sqrt[n]{r(n)} = \lim_{n\to\infty} \sqrt[n]{r(n)} = \rho.
			\end{equation*}
			Using Statement \ref{lem:master-lemma-2}, we also obtain
			\begin{equation*}
				\limsup_{n\to\infty} \sqrt[n]{u_f(n)} 
				\geq \lim_{j\to\infty} \sqrt[n]{u_f(m_j)} 
				= \lim_{j\to\infty} \sqrt[n]{r(m_j)} 
				= \lim_{n\to\infty} \sqrt[n]{r(n)} 
				= \rho.
			\end{equation*}
			Therefore, we have $\limsup_{n\to\infty} \sqrt[n]{u_f(n)} = \rho$.
			\item Let $j \in \IN$. By the definition of $s$, there are more than $r(m_j)/2$ stages, denoted by $t_1, \dots, t_{r(m_j)/2}, t_{r(m_j)/2 + 1}$, satisfying the following:
			\begin{itemize}
				\item $\mathcal{R}_i$ receives attention at these stages.
				\item Between these stages  Statement~\ref{lem:abschaetzung-spruenge-igel-2} of Lemma \ref{lem:abschaetzung-spruenge-igel} is met.
				\item We have $w(i)[t] = j$ for all $t \in \{t_1, \dots, t_{r(m_j)/2} \}$.
			\end{itemize}
			Between these stages $\varphi_i$ has only enumerated numbers which are at least $m_j$. It is not possible that $\varphi_i$ has enumerated some number less than~$m_j$, due to the inequality of Statement~\ref{lem:abschaetzung-spruenge-igel-2} of Lemma \ref{lem:abschaetzung-spruenge-igel}. Since also Statement~\ref{lem:abschaetzung-spruenge-igel-1} of Lemma \ref{lem:abschaetzung-spruenge-igel} holds after these stages, we must have 
			\begin{equation*}
				\sum_{k=0}^{\infty} u_{\varphi_i}(m_j+k) \cdot 2^{-(m_j+k)} \geq \frac{1}{2} \cdot r(m_j) \cdot \frac{3}{4} \cdot 2^{-m_j} = \frac{3}{8} \cdot r(m_j) \cdot 2^{-m_j}.
			\end{equation*}
			\item Fix some function $h \colon \IN \to \IN$ with $\sum_{k=0}^{\infty} h(k) \cdot 2^{-k} < \frac{3}{8}$ such that the sequence $\left(\sqrt[n]{h(n)}\right)_n$ converges to $2$. In particular, we have
			\begin{equation}\label{ineq:reihe-mit-hilfsfunktion}
				\sum_{k=0}^{\infty} h(n+k) \cdot 2^{-(n+k)} < \frac{3}{8}
			\end{equation}
			for all $n \in \IN$.
			Combining Statement \ref{lem:master-lemma-4} with Inequality \ref{ineq:reihe-mit-hilfsfunktion}, this implies that there is a number $k_j \in \IN$ with
			\begin{equation*}\label{ineq:abschaetzung-von-u_f}
				u_{\varphi_i}(m_j + k_j) \geq h(m_j+k_j) \cdot r(m_j) \cdot 2^{-m_j}
			\end{equation*}
			and we obtain the following facts:
			\begin{align}
				\lim_{j \to \infty} \sqrt[m_j + k_j]{h(m_j+k_j)} &= 2 \label{fact-1} \\
				\limsup_{j \to \infty} \sqrt[m_j + k_j]{r(m_j) \cdot 2^{-m_j}} &\geq \frac{\rho}{2} \label{fact-2}
			\end{align}
			On the one hand, we have $\lim_{n\to\infty} \sqrt[n]{h(n)} = 2$. On the other hand, $(m_j)_j$ tends to infinity. Therefore, Fact~\ref{fact-1} is clear. In order to prove Fact~\ref{fact-2}, recall that we also have $\lim_{n\to\infty} \sqrt[n]{r(n) \cdot 2^{-n}} = \frac{\rho}{2}$. Due to $\rho \in \left[1,2\right]$, the actual value of the limit superior depends on the sequence $(k_j)_j$, but we can still conclude $\limsup_{j \to \infty} \sqrt[m_j + k_j]{r(m_j) \cdot 2^{-m_j}} \in \left[\frac{\rho}{2}, 1\right]$. Using Statement \ref{lem:master-lemma-4}, Fact \ref{fact-1} and Fact \ref{fact-2}, we obtain as desired
			\begin{align*}
				\limsup_{n\to\infty} \sqrt[n]{u_{\varphi_i}(n)} 
				&\geq \limsup_{j \to \infty} \sqrt[m_j + k_j]{h(m_j+k_j) \cdot r(m_j) \cdot 2^{-m_j}}  \\
				&= 2 \cdot \limsup_{j \to \infty} \sqrt[m_j + k_j]{r(m_j) \cdot 2^{-m_j}} \\
				&\geq 2 \cdot \frac{\rho}{2} \\
				&= \rho.
			\end{align*}
		\item Note that Statement \ref{lem:master-lemma-4} is equivalent to $\sum_{k=0}^{\infty} u_{\varphi_i}(m_j+k) \cdot 2^{-k} \geq \frac{3}{8} \cdot r(m_j)$ for all $j \in \IN$. This implies  \begin{equation}\label{ineq:einfache-abschaetzung-igel}
			u_{\varphi_i}(m_j+k) \geq \frac{3}{16} \cdot r(m_j)
		\end{equation}
		for some numer $k \in \IN$.
		
		Fix some arbitrary constant $c \in \IN$, and choose some number $j \in \IN$ with $r(m_j) > \frac{16}{3} \cdot c$. Note that such a number exists, since both $(m_j)_j$ and $r$ tend to infinity. Furthermore, choose some number $k \in \IN$ satisfying Inequality~\ref{ineq:einfache-abschaetzung-igel}. Then we obtain
		\begin{equation*}
			u_{\varphi_i}(m_j+k) \geq \frac{3}{16} \cdot r(m_j) > \frac{3}{16} \cdot \frac{16}{3} \cdot c = c.
		\end{equation*}
		Therefore, $u_{\varphi_i}$ is not bounded.
		\end{enumerate}
	\end{proof}
	
	Using Lemma \ref{lem:master-lemma}, we can prove the remaining properties of $x$. Statement~\ref{lem:master-lemma-3} implies $\varsigma_x \leq \rho$ and Statement \ref{lem:master-lemma-5} implies $\varsigma_x \geq \rho$. Therefore, we have $\varsigma_x = \rho$. Then, $f$ is a computable name for $x$ satisfying $\limsup_{n\to\infty} \sqrt[n]{u_f(n)} = \varsigma_x$. Finally, Statement \ref{lem:master-lemma-6} implies that $x$ is not regular. This ends the proof of this theorem.
\end{proof}

\begin{kor}
	There exists a reordered computable number $x$ with $\varsigma_x = 1$ which is not regular.
\end{kor}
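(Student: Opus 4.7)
The plan is to obtain this as an immediate consequence of Theorem~\ref{satz:hierarchiesatz-roc}. The value $\rho = 1$ is a rational number, so in particular it is computably approximable, and it lies in the interval $\left[1, 2\right]$. Hence the hypothesis of the hierarchy theorem is satisfied with this choice of $\rho$.

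Applying Theorem~\ref{satz:hierarchiesatz-roc} to $\rho = 1$ directly yields a reordered computable number $x$ with $\varsigma_x = 1$ which is not regular, which is exactly the statement of the corollary. No further work is required; the corollary is simply the instantiation of the hierarchy theorem at the left endpoint of the interval.

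There is no real obstacle here, since the whole point of having proved the hierarchy theorem for arbitrary computably approximable $\rho \in \left[1, 2\right]$ is to be able to read off such corollaries. It is worth pointing out in the proof that this result is genuinely informative: it shows that the implication in Statement (c) of the earlier proposition (if $x$ is regular then $\varsigma_x = 1$) cannot be reversed, so the characteristic dyadic value alone does not detect regularity among reordered computable numbers.
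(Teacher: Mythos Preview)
Your proposal is correct and matches the paper's approach exactly: the corollary is stated immediately after Theorem~\ref{satz:hierarchiesatz-roc} with no separate proof, as it is just the instantiation at $\rho = 1$. Your added remark that this shows the implication ``regular $\Rightarrow \varsigma_x = 1$'' cannot be reversed is a nice observation, though the paper does not spell it out here.
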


\begin{kor}
	There exists a reordered computable number $x$ with $\varsigma_x = 2$.
\end{kor}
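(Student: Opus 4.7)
The plan is to read this off as an immediate specialization of Theorem~\ref{satz:hierarchiesatz-roc}. That theorem produces, for every computably approximable $\rho \in [1,2]$, a reordered computable number $x$ with $\varsigma_x = \rho$. Since the value $\rho = 2$ is a rational (hence trivially computable and in particular computably approximable) number lying in $[1,2]$, applying the theorem with this choice of $\rho$ directly yields a reordered computable number $x$ with $\varsigma_x = 2$.

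So the entire proof is: invoke Theorem~\ref{satz:hierarchiesatz-roc} at $\rho = 2$, and extract the witness $x$ it produces. There is no additional verification to do, and no obstacle to overcome, since the heavy lifting (the infinite diagonalization and the analysis via Lemma~\ref{lem:master-lemma}) has already been carried out in the proof of the theorem. The only minor remark one might add is that Lemma~\ref{prop:berechenbare-konvergenz-berechenbar-approximierbar} handles $\rho = 2$ as one of its three explicit cases, so the construction of the auxiliary function $r$ used inside the theorem is unambiguous for this choice.
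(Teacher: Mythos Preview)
Your proposal is correct and matches the paper's approach exactly: the corollary is stated without proof in the paper precisely because it is an immediate specialization of Theorem~\ref{satz:hierarchiesatz-roc} at $\rho = 2$. Your additional remark about Lemma~\ref{prop:berechenbare-konvergenz-berechenbar-approximierbar} handling $\rho = 2$ explicitly is accurate but not strictly needed.
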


Note that for the reordered computable number $x$ constructed in Theorem~\ref{satz:hierarchiesatz-roc} there is a computable name $f \colon \IN \to \IN$ with $\limsup_{n\to\infty} \sqrt[n]{u_f(n)} = \varsigma_x$. It is unclear whether there exists a reordered computable number $y$ such that for every computable name $g \colon \IN \to \IN$ for $y$ we have $\limsup_{n\to\infty} \sqrt[n]{u_g(n)} > \varsigma_y$. We leave this as on open question.

\section{Acknowledgments}

The author would like to thank Peter Hertling for helpful discussions.

\bibliography{cca}
\bibliographystyle{abbrv}

\end{document}